\newcommand{\Lcal}{\mathcal{L}}
\newcommand{\Ical}{\mathcal{I}}
\newcommand{\Ocal}{\mathcal{O}}
\newcommand{\Fcal}{\mathcal{F}}
\newcommand{\Kcal}{\mathcal{K}}
\newcommand{\Jcal}{\mathcal{J}}
\newcommand{\Gcal}{\mathcal{G}}
\newcommand{\Hcal}{\mathcal{H}}
\newcommand{\ol}{\overline}
\newcommand{\ox}{\otimes}
\renewcommand{\:}{\colon}
\newcommand{\IP}{\text{\bf P}}
\renewcommand{\d}{\partial}
\newcommand{\lra}{\longrightarrow}
\newcommand{\Div}{\text{\rm Div}}
\newcommand{\Cyc}{\text{\rm Cyc}}
\newcommand{\Twist}{\text{\rm Twist}}
\newtheorem{theorem}{Theorem}[section]
\newtheorem{lemma}[theorem]{Lemma}
\newtheorem{proposition}[theorem]{Proposition}
\theoremstyle{definition}
\newtheorem{example}[theorem]{Example}
\newtheorem{subsct}[theorem]{}
\theoremstyle{plain}
\font\smallrm=cmr8
\begin{document}
\author[{\smallrm Eduardo Esteves}]
{Eduardo Esteves}
\thanks{Supported by CNPq, Proc.~303797/2007-0 and 473032/2008-2, and FAPERJ, 
Proc.~E-26.102.769/2008.}
\title[{\smallrm Limits of Cartier divisors}]
{Limits of Cartier divisors}
\begin{abstract} Consider a one-parameter 
family of algebraic varieties degenerating to a 
reducible one. Our main result is a formula for 
the fundamental cycle of the limit subscheme of any 
family of effective Cartier divisors. The formula 
expresses this cycle as a sum of Cartier divisors, 
not necessarily effective, of the components of the 
limit variety.
\end{abstract}
\maketitle

\section{Introduction}

Consider a local one-parameter family of 
Noetherian schemes. More precisely, 
let $f\: X\to S$ be a flat 
map of Noetherian schemes, where $S$ stands for the 
spectrum of a discrete valuation ring. Let $s$ and 
$\eta$ denote the special and generic points of $S$; 
put $X_s:=f^{-1}(s)$ and $X_\eta:=f^{-1}(\eta)$. 
Assume that $X_s$ is of pure dimension and 
has no embedded components. 

Let $D$ be an effective Cartier divisor of $X$. 
View it as a subscheme of $X$, and 
let $\lim D$ be the schematic boundary of 
$D\cap X_\eta$. Then $\lim D\subseteq D\cap X_s$. 
Equality does not necessarily hold, as $D$ may 
contain components of $X_s$ in its support. 

This note presents a formula for the 
fundamental class $[\lim D]$ of $\lim D$ in terms of 
Cartier divisors of the components of $X_s$; see 
Theorem~\ref{thm}. The idea used in its proof is that, 
even though $D$ may not restrict 
to an effective Cartier divisor of a given component 
of $X_s$, a suitable modification of 
$D$ may. Suitable modifications may not exist. 
They do when $X_s$ is reduced, a consequence of 
Proposition~\ref{redcartier}. At any rate, when 
they exist, a formula for $[\lim D]$ is derived by 
keeping track of the modifications and their 
restrictions to the components of $X_s$.

The idea used in the proof of the main theorem is reminiscent of that 
behind the definition of limit linear series, 
as explained in \cite{EH}. 
And, in fact, the main application of the theorem 
so far is in computing limits of ramification 
points of families of linear systems. The theorem is 
perfectly adapted for dealing with the case of plane 
curves, the study of which will be done in 
\cite{EM}. Example \ref{simple} is given to show, 
in a very simple situation, how the theorem will be 
applied there.

A rough layout of the paper is as follows. In 
Section 2 we define modifications, and present the 
main technical lemmas that will allow us to keep 
track of them later on. Section 3 is devoted to defining 
cycles and limit cycles, and proving a few of their 
fundamental properties, among them Proposition \ref{limsum}, 
stating that taking the fundamental class of the limit is 
additive for Cartier divisors. Section 4 is the 
heart of the notes, containing the main result, 
Theorem \ref{thm}, and the auxiliary Proposition~\ref{redcartier}, 
giving conditions for when the theorem may be applied. Finally, in 
Section 5 we present examples to show how 
Theorem~\ref{thm} can be applied.

\section{Modifications}

\begin{subsct}\label{setup}\setcounter{equation}{0}
({\it Setup.}) 
Throughout the paper, 
$S$ will stand for the spectrum of a discrete valuation 
ring, $s$ for its closed point and $\eta$ for its 
generic point. Also, $\pi$ will denote a local parameter of 
$S$ at $s$. 

Throughout the paper, $f\: X\to S$ will stand for 
a map from a Noetherian scheme $X$. Set 
$X_s:=f^{-1}(s)$ and $X_\eta:=f^{-1}(\eta)$. We call 
$X_s$ the \emph{special fiber} and $X_\eta$ the 
\emph{generic fiber} of $f$. We will always assume 
$X_s$ has no embedded components. Denote by 
$C_1,\dots,C_n$ the subschemes of $X_s$ defined by the 
primary ideal sheaves of 0 in $\Ocal_{X_s}$, and 
$\xi_1,\dots,\xi_n$ their generic points. We say 
that $C_1,\dots,C_n$ are the 
\emph{irreducible primary subschemes} of $X_s$.

A union of irreducible primary subschemes of $X_s$, defined 
by the intersection of the corresponding sheaves of ideals, will 
be called a \emph{primary subscheme} of $X_s$. 
If $Y$ is a primary subscheme of $X_s$, the union of all 
the irreducible primary subschemes not contained in $Y$ 
will be called the 
\emph{complementary primary subscheme to $Y$} and denoted 
$Y^c$. By definition, the empty set and 
$X_s$ are to be considered primary subschemes of $X_s$. 

Let $\Div(X)$ denote the group of Cartier divisors of 
$X$, and $\Div^+(X)$ the submonoid of effective 
Cartier divisors. We will view an element of 
$\Div^+(X)$ as a closed subscheme of $X$. Conversely, 
we will write $Y\in\Div^+(X)$ for any closed subscheme 
of $X$ defined locally everywhere by a nonzero divisor.

For each closed subscheme $Y$ of $X$, let 
$\Ical_Y$ denote its sheaf of ideals. If $Z$ is another 
closed subscheme, we write $Y\leq Z$ if $Y\subseteq Z$. If 
$D\in\Div^+(X)$, let $Y+D$ denote the 
closed subscheme of $X$ given by the sheaf of ideals 
$\Ical_D\Ical_Y$. In addition, if $D\subseteq Y$, 
let $Y-D$ denote the residual subscheme, given 
by the conductor ideal $(\Ical_Y:\Ical_D)$. Of course, 
$Y-D\leq Y\leq Y+D$ and $Y=(Y-D)+D=(Y+D)-D$.

Let $\Twist(f)$ denote the free Abelian group generated 
by $C_1,\dots,C_n$. An element of 
$\Twist(f)$ will be called 
a \emph{twister}. We say that a twister
$\gamma=\sum_im_iC_i$ is \emph{effective} if $m_i\geq 0$ 
for each $i=1,\dots,n$, and \emph{reduced} 
if, in addition, 
$m_i\leq 1$ for each $i=1,\dots,n$. 
Let $\Twist^+(f)\subset\Twist(f)$ denote the submonoid of 
effective twisters. 
We can naturally identify the set of 
primary subschemes of $X_s$ with the set of reduced 
effective twisters.
\end{subsct}

\begin{subsct}\setcounter{equation}{0}
({\it Modifications by primary subschemes.}) 
Let $\Jcal$ be a coherent sheaf on $X$, 
and $Y$ a primary subscheme of $X_s$. Let 
$\Jcal_Y$ denote the restriction of 
$\Jcal$ to $Y$ modulo torsion. 
In other words, $\Jcal_{Y}$ is the 
image of the natural map 
      $$\Jcal|_{Y}\lra
        \bigoplus_{\xi_i\in Y}(\Jcal|_Y)_{\xi_i}$$
Let $\Jcal(-Y)$ denote the kernel of 
the quotient map $\Jcal\to\Jcal_Y$. We say that 
$\Jcal(-Y)$ is a \emph{modification by $Y$ of} $\Jcal$. 
By definition, $\Jcal_\emptyset=0$ and 
$\Jcal(-\emptyset)=\Jcal$.

(We will never use the above construction for a sheaf denoted $\Ical$. 
So, throughout the paper, $\Ical_Y$ will always be understood as the 
sheaf of ideals of a subscheme $Y$ of $X$.)

Notice that $\Jcal(-Y)$ is also the kernel of the natural map
      $$\Jcal\lra\bigoplus_{\xi_i\in Y}
        (\Jcal|_{X_s})_{\xi_i}.$$ 
Thus, for each primary subscheme $Z$ of $X_s$ containing 
$Y$, we have that $\Jcal(-Z)\subseteq\Jcal(-Y)$. 
In addition, $\pi\Jcal\subseteq\Jcal(-X_s)$. Hence, 
there is a natural map, $\Jcal\to\Jcal(-Y)$, obtained 
as the composition,
      $$\Jcal\lra\pi\Jcal\lra\Jcal(-X_s)\lra\Jcal(-Y),$$
where the first map is multiplication by $\pi$.

If $\Lcal$ is an invertible sheaf on $X$, then
$(\Jcal\ox\Lcal)(-Y)=\Jcal(-Y)\ox\Lcal$, as 
subsheaves of $\Jcal\ox\Lcal$.

The sheaves $\Jcal_Y$ and $\Jcal(-Y)$ and all of the maps above are functorial on 
$\Jcal$. So are the maps and the statements of the proposition below.
\end{subsct}

\begin{proposition}\setcounter{equation}{0}\label{prep}
Let $\Jcal$ be a 
coherent sheaf on $X$. Then the following three 
statements hold.
\begin{enumerate}
\item For all primary subschemes $Y$ and $Z$ of 
$X_s$,
      $$\Jcal(-Y)(-Z)=\Jcal(-Z)(-Y).$$
\item For all primary subschemes $Y$ and $Z$ of 
$X_s$ such that $Z\subseteq Y^c$, the inclusions 
$\Jcal(-Y)\to\Jcal$ and $\Jcal(-Z)\to\Jcal$ induce 
injections $\Jcal(-Y)_Z\to\Jcal_Z$ and 
$\Jcal(-Z)_Y\to\Jcal_Y$ whose cokernels are isomorphic.
\item For all primary subschemes $Y_1$, $Y_2$ and 
$Y_3$ of $X_s$ with $Y_2\subseteq Y_1^c$ and 
$Y_3\subseteq Y_2^c$, the inclusion 
$\Jcal(-Y_1\cup Y_2)\to\Jcal(-Y_1)$ induces a short exact 
sequence:
      $$0\to\Jcal(-Y_1\cup Y_2)_{Y_3}
         \lra\Jcal(-Y_1)_{Y_2\cup Y_3}
	 \lra\Jcal(-Y_1)_{Y_2}\to 0.$$
\end{enumerate}
\end{proposition}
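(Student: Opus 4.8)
The plan is to reduce all three parts to the behavior of the modifications at the generic points $\xi_1,\dots,\xi_n$, using the kernel description $\Jcal(-Y)=\ker\bigl(\Jcal\lra\bigoplus_{\xi_i\in Y}(\Jcal|_{X_s})_{\xi_i}\bigr)$ recorded above. Since the target is a direct sum of pushforwards from the points $\xi_i$, this kernel commutes with passage to stalks. The first task, and the main obstacle, is to pin down two local facts. Because $X_s$ has no embedded components the $\xi_i$ are pairwise incomparable, so the $\xi_i$-indexed summand has vanishing stalk at every point not in $C_i$, in particular at $\xi_j$ for $j\neq i$. From this I would read off: (a) for $\xi_j\notin Y$ the inclusion $\Jcal(-Y)\lra\Jcal$ is an isomorphism on stalks at $\xi_j$, so $\Jcal(-Y)_{\xi_j}=\Jcal_{\xi_j}$; and (b) for $\xi_i\in Y$ only the $i$-th summand survives at $\xi_i$, whence $\Jcal(-Y)_{\xi_i}=\pi\Jcal_{\xi_i}$, as $X_s$ is cut out by $\pi$ and $\ker(\Jcal_{\xi_i}\to(\Jcal/\pi\Jcal)_{\xi_i})=\pi\Jcal_{\xi_i}$.

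For (1) I would compute the stalk of $\Jcal(-Y)(-Z)$ at an arbitrary point $x$. Applying the kernel description to the sheaf $\Jcal(-Y)$ and taking the stalk at $x$, only the summands attached to those $\xi_i\in Z$ with $x\in C_i$ survive, so a germ $m\in\Jcal(-Y)_x$ lies in $\Jcal(-Y)(-Z)_x$ exactly when its image $m_{\xi_i}$ lies in $\pi\Jcal(-Y)_{\xi_i}$ for each such $i$. Inserting the local values from (a),(b)---namely $\pi\Jcal(-Y)_{\xi_i}=\pi\Jcal_{\xi_i}$ if $\xi_i\notin Y$ and $=\pi^2\Jcal_{\xi_i}$ if $\xi_i\in Y$---and combining with the conditions $m_{\xi_i}\in\pi\Jcal_{\xi_i}$ (for $\xi_i\in Y$, $x\in C_i$) defining $\Jcal(-Y)_x$, I find that $m\in\Jcal(-Y)(-Z)_x$ precisely when $m_{\xi_i}\in\pi\Jcal_{\xi_i}$ for every $\xi_i$ lying in exactly one of $Y,Z$ and $m_{\xi_i}\in\pi^2\Jcal_{\xi_i}$ for every $\xi_i\in Y\cap Z$ (throughout with $x\in C_i$). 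This description is symmetric in $Y$ and $Z$, proving (1). I would then record two consequences: the identity $\Jcal(-Y)\cap\Jcal(-Z)=\Jcal(-Y\cup Z)$, immediate from the kernel description for all $Y,Z$; and the disjoint-union identity $\Jcal(-Y)(-Z)=\Jcal(-Y\cup Z)$ valid when $Z\subseteq Y^c$, which is the special case $Y\cap Z=\emptyset$ of the symmetric description (no $\pi^2$ conditions occur).

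For (2), with $Z\subseteq Y^c$, I would work inside $\Jcal_{Y\cup Z}=\Jcal/\Jcal(-Y\cup Z)$. The disjoint-union identity and the definition of modification realize $\Jcal(-Y)_Z=\Jcal(-Y)/\Jcal(-Y\cup Z)$ and $\Jcal(-Z)_Y=\Jcal(-Z)/\Jcal(-Y\cup Z)$ as subsheaves of $\Jcal_{Y\cup Z}$, with $\Jcal_{Y\cup Z}/\Jcal(-Z)_Y=\Jcal_Z$ and $\Jcal_{Y\cup Z}/\Jcal(-Y)_Z=\Jcal_Y$. Since $\Jcal(-Y)\cap\Jcal(-Z)=\Jcal(-Y\cup Z)$ the two subsheaves meet in $0$, so the composites $\Jcal(-Y)_Z\hookrightarrow\Jcal_{Y\cup Z}\twoheadrightarrow\Jcal_Z$ and $\Jcal(-Z)_Y\hookrightarrow\Jcal_{Y\cup Z}\twoheadrightarrow\Jcal_Y$---which are the maps of the statement---are injective, and both have cokernel $\Jcal_{Y\cup Z}/(\Jcal(-Y)_Z+\Jcal(-Z)_Y)$. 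Symmetry of this expression yields the isomorphism of cokernels.

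For (3), I would put $\Kcal:=\Jcal(-Y_1)$ and use the hypotheses $Y_2\subseteq Y_1^c$, $Y_3\subseteq Y_2^c$ to apply the disjoint-union identity twice, obtaining $\Jcal(-Y_1\cup Y_2)=\Kcal(-Y_2)$ and $\Kcal(-Y_2\cup Y_3)=\Kcal(-Y_2)(-Y_3)$. The three terms then become $\Kcal(-Y_2)_{Y_3}$, $\Kcal_{Y_2\cup Y_3}$, $\Kcal_{Y_2}$. Setting $A:=\Kcal(-Y_2\cup Y_3)\subseteq B:=\Kcal(-Y_2)\subseteq C:=\Kcal$, the definition of modification gives $B/A=\Kcal(-Y_2)/\Kcal(-Y_2)(-Y_3)=\Kcal(-Y_2)_{Y_3}$, $C/A=\Kcal_{Y_2\cup Y_3}$ and $C/B=\Kcal_{Y_2}$, so the required sequence is exactly the tautological $0\to B/A\to C/A\to C/B\to 0$ of the chain $A\subseteq B\subseteq C$. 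Functoriality of the constructions ensures its arrows coincide with those induced by $\Jcal(-Y_1\cup Y_2)\to\Jcal(-Y_1)$. The whole argument thus rests on the local analysis of the first paragraph and the resulting disjoint-union identity; the remaining steps are formal.
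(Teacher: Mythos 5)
Your proof is correct. For parts (2) and (3) it is essentially the paper's own argument: part (3) is the same tautological short exact sequence of the chain $\Jcal(-Y_1\cup Y_2)(-Y_3)\subseteq\Jcal(-Y_1\cup Y_2)\subseteq\Jcal(-Y_1)$, with the leftmost term rewritten as $\Jcal(-Y_1)(-Y_2\cup Y_3)$ via part (1); and part (2) is the same double-quotient computation, which the paper phrases as $\Jcal_Z/\Jcal(-Y)_Z=\Jcal/(\Jcal(-Y)+\Jcal(-Z))$ rather than inside $\Jcal_{Y\cup Z}$ --- the two cokernels are the same sheaf either way. The only genuine divergence is in part (1). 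The paper first proves the disjoint case $\Jcal(-Y)(-Z)=\Jcal(-Y\cup Z)$ for $Z\subseteq Y^c$, using only that $\Jcal(-Y)\to\Jcal$ is an isomorphism at each $\xi_i\notin Y$, and then deduces the general case purely formally by writing $Y=Y'\cup W$ and $Z=Z'\cup W$ with $W$ the common part and shuffling the operations. You instead compute the stalks of $\Jcal(-Y)(-Z)$ at every point directly, arriving at the manifestly symmetric description ($\pi$-divisibility of the germ at each $\xi_i$ lying in exactly one of $Y,Z$, $\pi^2$-divisibility at each $\xi_i\in Y\cap Z$); this needs the additional local fact $\Jcal(-Y)_{\xi_i}=\pi\Jcal_{\xi_i}$ for $\xi_i\in Y$, which the paper's proof of this proposition never invokes. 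Your route is slightly more computational but delivers the intersection identity $\Jcal(-Y)\cap\Jcal(-Z)=\Jcal(-Y\cup Z)$ and the disjoint-union identity as explicit byproducts, which is exactly what you then feed into (2) and (3); the paper's route stays at the level of the functorial kernel description and is more formal. Both arguments are sound and rest on the same underlying observation that everything is controlled at the generic points $\xi_1,\dots,\xi_n$, which are pairwise incomparable because $X_s$ has no embedded components.
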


\begin{proof} Clearly, $\Jcal(-Y)|_{X-Y}=\Jcal|_{X-Y}$. 
In particular, the natural map 
      $$(\Jcal(-Y)|_{X_s})_{\xi_i}\longrightarrow
        (\Jcal|_{X_s})_{\xi_i}$$
is bijective for each $\xi_i\not\in Y$. Therefore, 
$\Jcal(-Y)(-Z)=\Jcal(-Y\cup Z)$ 
if $Z\subseteq Y^c$. More generally, 
writing $Y=Y'\cup W$ and $Z=Z'\cup W$, where 
$Y'$, $Z'$ and $W$ are primary subschemes such that 
$Y'\subseteq Z^c$ and $Z'\subseteq Y^c$, we have
     \begin{align*}
     \Jcal(-Y)(-Z)=&\Jcal(-Y')(-W)(-Z')(-W)=
     \Jcal(-Y')(-Z)(-W)\\
     =&\Jcal(-Y'\cup Z)(-W)=\Jcal(-Z)(-Y')(-W)\\
     =&\Jcal(-Z)(-Y).
     \end{align*}

As for the second statement, since 
$\Jcal(-Y)_{\xi_i}=\Jcal_{\xi_i}$ for every 
$\xi_i\in Z$, it follows that the naturally induced 
map $\Jcal(-Y)_Z\to\Jcal_Z$ is injective. An analogous 
reasoning shows that $\Jcal(-Z)_Y\to\Jcal_Y$ is also 
injective. Now,
      $$\frac{\Jcal_Z}{\Jcal(-Y)_Z}=
        \frac{\Jcal/\Jcal(-Z)}{\Jcal(-Y)/\Jcal(-Y)(-Z)}
	=\frac{\Jcal}{\Jcal(-Y)+\Jcal(-Z)}.$$
By symmetry, $\Jcal_Y/\Jcal(-Z)_Y$ is thus 
isomorphic to $\Jcal_Z/\Jcal(-Y)_Z$.

As for the third statement, consider 
the natural short exact sequence:
      $$0\to\frac{\Jcal(-Y_1\cup Y_2)}
        {\Jcal(-Y_1\cup Y_2)(-Y_3)}\to
	\frac{\Jcal(-Y_1)}{\Jcal(-Y_1\cup Y_2)(-Y_3)}\to
	\frac{\Jcal(-Y_1)}{\Jcal(-Y_1\cup Y_2)}\to 0.$$
By definition, the first quotient is 
$\Jcal(-Y_1\cup Y_2)_{Y_3}$, while the last is 
$\Jcal(-Y_1)_{Y_2}$. Now, using the first statement, 
      $$\Jcal(-Y_1\cup Y_2)(-Y_3)=\Jcal(-Y_1)(-Y_2)(-Y_3)=
        \Jcal(-Y_1)(-Y_2\cup Y_3).$$
So, we may identify the middle quotient with 
$\Jcal(-Y_1)_{Y_2\cup Y_3}$, and thus obtain the desired 
short exact sequence.
\end{proof}

\begin{subsct}\setcounter{equation}{0}
({\it Modification by twisters.}) Let $\Jcal$ be a 
coherent sheaf on $X$. For each $\gamma\in\Twist^+(f)$ define
a subsheaf $\Jcal^{\gamma}$ of $\Jcal$ 
recursively as follows: if 
$\gamma=0$, then $\Jcal^\gamma:=\Jcal$; if 
$\gamma\neq 0$, then let 
     $$\Jcal^{\gamma}:=\Jcal^{\gamma-C_i}(-C_i)$$ 
for any $C_i$ such that $\gamma-C_i$ is effective. 
It follows from the first statement of 
Proposition \ref{prep} that 
$\Jcal^\gamma$ is well-defined, and 
     $$\Jcal^{\gamma_1+\gamma_2}=
       (\Jcal^{\gamma_1})^{\gamma_2}$$
for every two $\gamma_1,\gamma_2\in\Twist^+(f)$.
We call $\Jcal^\gamma$ the 
\emph{$\gamma$-modification of $\Jcal$}.

If $\Lcal$ is an invertible sheaf on $X$, 
then $\Jcal^\gamma\ox\Lcal=(\Jcal\ox\Lcal)^\gamma$ as 
subsheaves of $\Jcal\ox\Lcal$. Also, the $\gamma$-modifications $\Jcal^\gamma$ 
are functorial on $\Jcal$.
\end{subsct}

\begin{subsct}\setcounter{equation}{0}
({\it Torsion-free rank-1 sheaves.}) 
Let $\Jcal$ be an $S$-flat coherent sheaf on $X$. 
We say that $\Jcal$ is \emph{torsion-free on} $X/S$ 
if the associated components 
of $\Jcal|_{X_s}$ are components of $X_s$, or equivalently, 
if the natural map 
$\Jcal|_{X_s}\to\Jcal_{X_s}$ is a bijection. 
We say that $\Jcal$ is \emph{of rank $1$ on} $X/S$ if 
$(\Jcal|_{X_s})_{\xi_i}\cong\Ocal_{X_s,\xi_i}$ 
for each $i=1,\dots,n$.
\end{subsct}

\begin{proposition}\setcounter{equation}{0}\label{prop} 
Let $\Jcal$ be a torsion-free sheaf on $X/S$, and 
$Y$ a primary subscheme of $X_s$. Set $Z:=Y^c$.
Then the following four statements hold.
\begin{enumerate}
\item $\pi\Jcal=\Jcal(-X_s)$ and the natural maps 
       $$\Jcal|_{X_s}\lra\Jcal_{X_s}\quad\text{and}\quad 
         \Jcal\lra\pi\Jcal$$
are isomorphisms.
\item $\Jcal(-Y)$ is torsion-free on $X/S$.
\item  The natural maps 
$\Jcal(-Y)\to\Jcal$ and $\Jcal\to\Jcal(-Y)$ are injective 
and induce short exact sequences:
      \begin{align*}
      0&\lra\Jcal(-Y)_Z\lra\Jcal|_{X_s}\lra\Jcal_Y
      \lra 0,\\
      0&\lra\Jcal_Y\lra\Jcal(-Y)|_{X_s}\lra\Jcal(-Y)_Z
      \lra 0.
      \end{align*}
\item If $\Jcal$ is of rank 1, so is $\Jcal(-Y)$.
\end{enumerate}
\end{proposition}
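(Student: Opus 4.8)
The plan is to prove (1) first, deduce the two exact sequences of (3) from it, and then read off (2) and (4) as consequences. For (1), I would use flatness and torsion-freeness directly. Since $\Jcal$ is $S$-flat over the discrete valuation ring, multiplication by $\pi$ is injective, so $\Jcal\to\pi\Jcal$ is an isomorphism; and $\Jcal|_{X_s}\to\Jcal_{X_s}$ is an isomorphism by the very definition of torsion-free on $X/S$. As $X_s$ is cut out by $\pi$, we have $\Jcal|_{X_s}=\Jcal/\pi\Jcal$, and since $\Jcal/\pi\Jcal\to\Jcal_{X_s}$ is then injective, the kernel $\Jcal(-X_s)$ of $\Jcal\to\Jcal_{X_s}$ coincides with the kernel $\pi\Jcal$ of $\Jcal\to\Jcal/\pi\Jcal$; thus $\pi\Jcal=\Jcal(-X_s)$.

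The key bridge to (3) is the identity $\Jcal(-Y)(-Z)=\Jcal(-Y\cup Z)=\Jcal(-X_s)=\pi\Jcal$, where the first equality is the case $Z\subseteq Y^c$ established in the proof of Proposition~\ref{prep}, and the last is (1). By the remark preceding that proposition this reads $\Jcal(-Y)_Z=\Jcal(-Y)/\Jcal(-Y)(-Z)=\Jcal(-Y)/\pi\Jcal$. For the first sequence, I would note that $\Jcal\to\Jcal_Y$ kills $\pi\Jcal$ and so factors through $\Jcal|_{X_s}=\Jcal/\pi\Jcal$, with kernel $\Jcal(-Y)/\pi\Jcal=\Jcal(-Y)_Z$ and image $\Jcal_Y$; this is $0\to\Jcal(-Y)_Z\to\Jcal|_{X_s}\to\Jcal_Y\to 0$. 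For the second, by (1) the natural map $\Jcal\to\Jcal(-Y)$ is multiplication by $\pi$, injective with image $\pi\Jcal\subseteq\Jcal(-Y)$; composing with $\Jcal(-Y)\to\Jcal(-Y)|_{X_s}$ sends $\Jcal(-Y)$ into $\pi\Jcal(-Y)$, hence factors through $\Jcal_Y=\Jcal/\Jcal(-Y)$. The resulting map $\Jcal_Y\to\Jcal(-Y)|_{X_s}$ is injective because $\pi$ is a nonzerodivisor on $\Jcal$, and its image $\pi\Jcal/\pi\Jcal(-Y)$ has cokernel $\Jcal(-Y)/\pi\Jcal=\Jcal(-Y)_Z$; this is $0\to\Jcal_Y\to\Jcal(-Y)|_{X_s}\to\Jcal(-Y)_Z\to 0$.

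Parts (2) and (4) then follow by bookkeeping. As a subsheaf of the $\pi$-torsion-free sheaf $\Jcal$, $\Jcal(-Y)$ is $\pi$-torsion-free, hence $S$-flat; and from the second sequence the associated points of $\Jcal(-Y)|_{X_s}$ lie among those of $\Jcal_Y$ (the $\xi_i\in Y$) and of $\Jcal(-Y)_Z$ (the $\xi_i\in Z$), all generic points of $X_s$, giving (2). For (4), when $\xi_i\in Z$ we have $\Jcal(-Y)|_{X-Y}=\Jcal|_{X-Y}$, so $(\Jcal(-Y)|_{X_s})_{\xi_i}=(\Jcal|_{X_s})_{\xi_i}\cong\Ocal_{X_s,\xi_i}$; when $\xi_i\in Y$, localizing the second sequence kills the term $\Jcal(-Y)_Z$, which is supported on $Z$ and so vanishes at $\xi_i\notin Z$, whence $(\Jcal(-Y)|_{X_s})_{\xi_i}\cong(\Jcal_Y)_{\xi_i}=(\Jcal|_{X_s})_{\xi_i}\cong\Ocal_{X_s,\xi_i}$.

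I expect the main obstacle to be the second exact sequence in (3): constructing the natural map $\Jcal_Y\to\Jcal(-Y)|_{X_s}$ correctly as multiplication by $\pi$ and identifying its cokernel with $\Jcal(-Y)_Z$ both hinge on the identity $\Jcal(-Y)(-Z)=\pi\Jcal$, which is exactly where flatness and torsion-freeness are used; everything else is formal once (1) and that identity are in place.
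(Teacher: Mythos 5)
Your proof is correct, but it is organized differently from the paper's. The paper proves (2) first, by computing $\Jcal(-Y)(-X_s)=(\pi\Jcal)(-Y)=\pi\Jcal(-Y)$ via functoriality, and then obtains both exact sequences in (3) as instances of the third statement of Proposition~\ref{prep} (with $(Y_1,Y_2,Y_3)=(\emptyset,Y,Z)$ and $(Y,Z,Y)$ respectively), using (1) to replace $\Jcal_{X_s}$ by $\Jcal|_{X_s}$ in the first sequence and using (2) to replace $\Jcal(-Y)_{X_s}$ by $\Jcal(-Y)|_{X_s}$ in the second. You instead build the sequences by hand as quotient computations, $0\to\Jcal(-Y)/\pi\Jcal\to\Jcal/\pi\Jcal\to\Jcal/\Jcal(-Y)\to 0$ and $0\to\pi\Jcal/\pi\Jcal(-Y)\to\Jcal(-Y)/\pi\Jcal(-Y)\to\Jcal(-Y)/\pi\Jcal\to 0$, with everything identified through the single identity $\Jcal(-Y)(-Z)=\Jcal(-X_s)=\pi\Jcal$; this lets you work with $\Jcal(-Y)|_{X_s}$ directly and never pass through $\Jcal(-Y)_{X_s}$, so you can legitimately reverse the logical order and deduce (2) afterwards from the associated points of the sub and quotient in your second sequence. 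Both arguments reduce to the same commutation fact from the proof of Proposition~\ref{prep}; the paper's version buys uniformity (the sequences are literally specializations of an already-proved general statement), while yours is more self-contained and makes transparent exactly where flatness ($\pi$ a nonzerodivisor on $\Jcal$) and torsion-freeness enter. Your treatments of (1) and (4) coincide with the paper's.
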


\begin{proof} The first map, 
$\Jcal|_{X_s}\to\Jcal_{X_s}$, is an isomorphism 
because the associated components of 
$\Jcal|_{X_s}$ are components of 
$X_s$. Clearly, it follows that $\Jcal(-X_s)=\pi\Jcal$. 
In addition, since $\Jcal$ is $S$-flat, the 
multiplication-by-$\pi$ 
map $\Jcal\to\pi\Jcal$ is an isomorphism.

As for the second statement, 
since $S$ is the spectrum of a discrete valuation ring, 
and $\Jcal$ is $S$-flat, also its subsheaf $\Jcal(-Y)$ is $S$-flat. 
In addition, since the multiplication-by-$\pi$ 
bijection $\Jcal\to\pi\Jcal$ carries $\Jcal(-Y)$ onto 
$(\pi\Jcal)(-Y)$, by functoriality, we have
      $$\Jcal(-Y)(-X_s)=\Jcal(-X_s)(-Y)=
      (\pi\Jcal)(-Y)=\pi\Jcal(-Y),$$
and thus the natural map 
$\Jcal(-Y)|_{X_s}\to\Jcal(-Y)_{X_s}$ is an isomorphism. 
So $\Jcal(-Y)$ is torsion-free.

Consider now the third statement. First, recall that the 
natural map $\Jcal(-Y)\to\Jcal$ is an inclusion, whence 
injective. Second, the natural map $\Jcal\to\Jcal(-Y)$ 
is injective if and only if the natural map 
$\Jcal\to\pi\Jcal$ is an isomorphism, 
and this is the case by the first statement.

As for the exact sequences, the first is obtained from 
that in Proposition \ref{prep} by setting 
$Y_1:=\emptyset$, $Y_2:=Y$ and $Y_3:=Z$, and recalling 
from the first statement that $\Jcal|_{X_s}=\Jcal_{X_s}$. 

The second is also obtained from that in 
Proposition \ref{prep}, this time by setting 
$Y_1:=Y$, $Y_2:=Z$ and $Y_3:=Y$. However, we use 
the composition of isomorphisms,
      $$\Jcal\lra\pi\Jcal\lra\Jcal(-X_s),$$ 
to replace the leftmost sheaf $\Jcal(-X_s)_Y$ 
with $\Jcal_Y$, and we use 
that $\Jcal(-Y)$ is torsion-free, 
to replace $\Jcal(-Y)_{X_s}$ with $\Jcal(-Y)|_{X_s}$.

The fourth statement follows from the two exact 
sequences of the third statement. Indeed, the first one 
yields 
$(\Jcal(-Y)|_{X_s})_{\xi_i}\cong(\Jcal|_{X_s})_{\xi_i}$ 
for each $\xi_i\in Z$, while the second one 
yields 
$(\Jcal|_{X_s})_{\xi_i}\cong(\Jcal(-Y)|_{X_s})_{\xi_i}$ 
for each $\xi_i\in Y$. 
Thus $\Jcal(-Y)$ is of rank 1 if and only 
if so is $\Jcal$.
\end{proof}

\section{Limits of Cartier divisors}

\begin{subsct}\setcounter{equation}{0} 
({\it Cycles.}) Assume $X_s$ is of pure dimension, 
say $d$. Let $\Cyc(X_s)$ denote the free Abelian 
group generated by all integral closed 
subschemes of $X_s$ of dimension $d-1$. We will simply 
say that an element of $\Cyc(X_s)$ is a \emph{cycle}. 
A cycle is called \emph{effective} if its expression 
as a $\text{\bf Z}$-linear combination of 
integral subschemes involves only nonnegative 
coefficients. Let $\Cyc^+(X_s)\subset\Cyc(X_s)$ 
denote the submonoid of effective cycles.

For any coherent sheaf $\Fcal$ on $X_s$ with support 
of dimension at most $d-1$, let 
      $$[\Fcal]:=\sum_Y\ell(\Fcal_{\xi_Y})[Y]
                \in\Cyc^+(X_s),$$
where the sum runs over all irreducible components 
$Y$ of dimension $d-1$ of the support of $\Fcal$, with 
$\xi_Y$ denoting the generic point of $Y$. Since 
localization is exact and length is additive, if 
      $$0\to\Fcal\to\Gcal\to\Hcal\to 0$$
is a short exact sequence of coherent sheaves on $X_s$ 
with support of dimension at most $d-1$, then 
$[\Fcal]=[\Gcal]+[\Hcal]$, a fact we refer to as the 
``additiveness of the bracket.'' Notice as well that, 
if $\Lcal$ is an invertible sheaf on $X_s$, then 
$[\Fcal\ox\Lcal]=[\Fcal]$.

If $W\subset X_s$ is a closed subscheme of dimension 
at most $d-1$, let $[W]:=[\Ocal_W]$. We call 
$[W]$ the \emph{fundamental class} of $W$.
\end{subsct}

\begin{lemma}\setcounter{equation}{0}\label{prip} Assume 
$X_s$ is of pure dimension.
Let $\Fcal$ be a coherent sheaf on $X$ and $\Gcal\subseteq\Fcal$ 
a coherent subsheaf. Let $C_{i_1},\dots,C_{i_m}$ be a 
collection of distinct irreducible primary subschemes of $X_s$. Suppose 
$\Fcal_{\xi_{i_j}}=\Gcal_{\xi_{i_j}}$ for each $j=1,\dots,m$. Set 
$$
Z_j:=C_{i_1}\cup\cdots\cup C_{i_j}\quad\text{and}\quad
Z'_j:=C_{i_{j+1}}\cup\cdots\cup C_{i_m}
$$
for each $j=0,\dots,m$. Then 
\begin{equation}\label{decform}
\Bigg[\frac{\Fcal_{Z_m}}{\Gcal_{Z_m}}\Big]=\sum_{j=0}^{m-1}\Bigg[
\frac{\Fcal(-Z_j)_{C_{i_{j+1}}}}{\Gcal(-Z_j)_{C_{i_{j+1}}}}\Bigg].
\end{equation}
\end{lemma}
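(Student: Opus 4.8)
The plan is to build compatible finite filtrations of $\Fcal_{Z_m}$ and $\Gcal_{Z_m}$ whose successive quotients are exactly the sheaves appearing on the right-hand side of \eqref{decform}, and then to conclude by additiveness of the bracket.

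First I would produce the basic recursion. For each $j=0,\dots,m-1$ apply the third statement of Proposition~\ref{prep} to $\Jcal:=\Fcal$ with $Y_1:=Z_j$, $Y_2:=C_{i_{j+1}}$ and $Y_3:=Z'_{j+1}$. The hypotheses $Y_2\subseteq Y_1^c$ and $Y_3\subseteq Y_2^c$ hold because $C_{i_1},\dots,C_{i_m}$ are distinct, and since $Y_1\cup Y_2=Z_{j+1}$ and $Y_2\cup Y_3=Z'_j$ the proposition yields a short exact sequence
\begin{equation*}
0\to\Fcal(-Z_{j+1})_{Z'_{j+1}}\lra\Fcal(-Z_j)_{Z'_j}\lra\Fcal(-Z_j)_{C_{i_{j+1}}}\to 0.
\end{equation*}
Writing $A_j:=\Fcal(-Z_j)_{Z'_j}$ and $B_j:=\Gcal(-Z_j)_{Z'_j}$, these sequences (and their analogues for $\Gcal$) exhibit $A_0=\Fcal_{Z_m}$ and $B_0=\Gcal_{Z_m}$ as the tops of finite filtrations $A_0\supseteq A_1\supseteq\cdots\supseteq A_m=0$ and $B_0\supseteq\cdots\supseteq B_m=0$ with graded pieces $A_j/A_{j+1}\cong\Fcal(-Z_j)_{C_{i_{j+1}}}$ and $B_j/B_{j+1}\cong\Gcal(-Z_j)_{C_{i_{j+1}}}$; here I use $Z_0=\emptyset$, $Z'_0=Z_m$ and $Z'_m=\emptyset$. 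By the functoriality of the modification and restriction operations, the inclusion $\Gcal\subseteq\Fcal$ induces a morphism of filtrations $B_\bullet\to A_\bullet$.

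The crux is to show that every comparison map $B_j\to A_j$, as well as each $\Gcal(-Z_j)_{C_{i_{j+1}}}\to\Fcal(-Z_j)_{C_{i_{j+1}}}$, is injective, and that all the cokernels are supported in dimension at most $d-1$. Both follow from the hypothesis $\Fcal_{\xi_{i_j}}=\Gcal_{\xi_{i_j}}$. Indeed, each relevant primary subscheme $W$ (namely $Z'_j$ or $C_{i_{j+1}}$) has all of its generic points among $\xi_{i_1},\dots,\xi_{i_m}$ and none of them lies in the subscheme by which $\Fcal$ and $\Gcal$ are being modified; hence, since modification does not alter stalks away from that subscheme, one gets $\Gcal(-Z_j)_{\xi_i}=\Gcal_{\xi_i}=\Fcal_{\xi_i}=\Fcal(-Z_j)_{\xi_i}$ for every $\xi_i\in W$. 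Because $\Gcal(-Z_j)_W$ embeds into $\bigoplus_{\xi_i\in W}(\Gcal(-Z_j)|_W)_{\xi_i}$ and each of these stalk maps is now an isomorphism onto the corresponding stalk of $\Fcal(-Z_j)_W$, the map $\Gcal(-Z_j)_W\to\Fcal(-Z_j)_W$ is injective with cokernel vanishing at every $\xi_i\in W$, i.e.\ supported in dimension at most $d-1$. This is exactly the condition that makes all the brackets in \eqref{decform} well-defined, and it also legitimizes reading $\Fcal_{Z_m}/\Gcal_{Z_m}$ as an honest quotient.

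Finally I would assemble the conclusion. Applying the snake lemma to the ladder formed by the $A$- and $B$-sequences, and using that all the vertical maps are injective, gives for each $j$ a short exact sequence of cokernels
\begin{equation*}
0\to Q_{j+1}\lra Q_j\lra\frac{\Fcal(-Z_j)_{C_{i_{j+1}}}}{\Gcal(-Z_j)_{C_{i_{j+1}}}}\to 0,\qquad Q_j:=\operatorname{coker}(B_j\to A_j).
\end{equation*}
All three terms are supported in dimension at most $d-1$, so additiveness of the bracket yields $[Q_j]=[Q_{j+1}]+[\Fcal(-Z_j)_{C_{i_{j+1}}}/\Gcal(-Z_j)_{C_{i_{j+1}}}]$. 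Since $Q_m=0$ and $Q_0=\Fcal_{Z_m}/\Gcal_{Z_m}$, telescoping over $j=0,\dots,m-1$ gives precisely \eqref{decform}. The main obstacle is the middle step: recognizing that the generic-equality hypothesis is exactly what forces the comparison maps to be injective, so that the snake lemma produces genuine short exact sequences of cokernels (rather than an unwieldy six-term sequence) and the brackets remain defined throughout the telescoping.
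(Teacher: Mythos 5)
Your proposal is correct and follows essentially the same route as the paper: both apply Proposition~\ref{prep}(3) with $Y_1=Z_j$, $Y_2=C_{i_{j+1}}$, $Y_3=Z'_{j+1}$ to $\Fcal$ and $\Gcal$, check injectivity of the comparison maps via the generic-stalk equality, and conclude by the snake lemma and additiveness of the bracket. Your direct stalk-by-stalk verification of injectivity for every vertical map is only a minor elaboration of the paper's shorter argument.
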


\begin{proof} For each $j=0,\dots,m-1$, apply the third statement of 
Proposition~\ref{prep} with $Y_1:=Z_j$, $Y_2:=C_{i_{j+1}}$ 
and $Y_3:=Z'_{j+1}$ to both $\Jcal:=\Fcal$ and $\Jcal:=\Gcal$. By functoriality, 
we get a natural map of short exact sequences:
       $$\begin{CD}
         0 \to \Gcal(-Z_{j+1})_{Z'_{j+1}} @>>>
         \Gcal(-Z_j)_{Z'_j} @>>> \Gcal(-Z_j)_{C_{i_{j+1}}}
	 \to 0\\
         @VVV @VVV @VVV\\
         0 \to \Fcal(-Z_{j+1})_{Z'_{j+1}} @>>>
         \Fcal(-Z_j)_{Z'_j} @>>> \Fcal(-Z_j)_{C_{i_{j+1}}}
	 \to 0.
          \end{CD}$$
Always, the vertical map to the right is injective with cokernel 
supported in codimension 1 in $C_{i_{j+1}}$, because 
$\Gcal_{\xi_{i_{j+1}}}=\Fcal_{\xi_{i_{j+1}}}$. Thus, all the 
vertical maps are injective with cokernel supported in codimension 1 
in $X_s$, and \eqref{decform} holds by the snake lemma.     
\end{proof}

\begin{subsct}\setcounter{equation}{0} 
({\it Limits of Cartier divisors.}) Assume 
that $f\:X\to S$ is flat, or equivalently, that 
$X_s$ is a Cartier divisor of $X$. Assume as well that $X_s$ is of 
pure dimension. Since $X_s$ is a Cartier divisor of $X$, also $X$, 
and thus $X_\eta$, are of pure dimension. 
For each closed subscheme $D$ of $X$, let 
       $$\lim D:=X_s\cap\ol{D\cap X_\eta}^X.$$
We call $\lim D$ the \emph{limit subscheme of $D$}.

Suppose $D\cap X_\eta$ is a Cartier divisor. 
Since $X_\eta$ is of pure dimension, $D\cap X_\eta$ 
is of pure codimension 1 in $X_\eta$. Thus, since 
$\ol{D\cap X_\eta}^X$ is $S$-flat, also 
$\lim D$ is of pure codimension 1 in $X_s$. 
Let $[\lim D]$ 
denote the associated cycle. We call $[\lim D]$ the 
\emph{limit cycle of $D$}. 

The operation 
$D\mapsto[\lim D]$ induces a homomorphism  
of monoids from $\Div^+(X)$ to $\Cyc^+(X_s)$, as a 
consequence of the proposition below. 
\end{subsct}

\begin{proposition}\setcounter{equation}{0}\label{limsum}
Assume that $f\:X\to S$ is flat and $X_s$ has pure 
dimension. Let $D_1$, $D_2$ and $D_3$ be $S$-flat 
closed subschemes of $X$ of pure codimension $1$. Assume 
that $D_1\cap X_\eta$ is a Cartier divisor of $X_\eta$ and 
       $$D_3\cap X_\eta=(D_1\cap X_\eta)+
         (D_2\cap X_\eta).$$
Then
       $$[D_3\cap X_s]=[D_1\cap X_s]+[D_2\cap X_s].$$  
\end{proposition}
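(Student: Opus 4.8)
The plan is to realize $D_3$ out of $D_1$ and $D_2$ through a short exact sequence of $S$-flat sheaves on $X$, restrict that sequence to $X_s$, read off most of the cycle identity from the additiveness of the bracket, and then match the remaining term with $[D_2\cap X_s]$ by a local length computation. First I would record the relevant inclusion on all of $X$. Since each $D_i$ is $S$-flat, it is the schematic closure of its generic fiber, so $\Ical_{D_i}$ is the largest subsheaf of $\Ocal_X$ restricting to $\Ical_{D_i}|_{X_\eta}$ and having $\pi$-torsion-free quotient. From $\Ical_{D_3}|_{X_\eta}=\Ical_{D_1}|_{X_\eta}\Ical_{D_2}|_{X_\eta}\subseteq\Ical_{D_1}|_{X_\eta}$, and the fact that taking closures preserves inclusions, we get $\Ical_{D_3}\subseteq\Ical_{D_1}$ on $X$. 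Put $\Kcal:=\Ical_{D_1}/\Ical_{D_3}$, which sits in the short exact sequence
$$0\lra\Kcal\lra\Ocal_{D_3}\lra\Ocal_{D_1}\lra 0$$
on $X$. As a subsheaf of the $S$-flat sheaf $\Ocal_{D_3}$, the sheaf $\Kcal$ is $\pi$-torsion-free, hence $S$-flat.

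Next I would restrict to $X_s$. Because $\Ocal_{D_1}$ is $S$-flat, i.e. $\pi$ is a nonzerodivisor on it, tensoring the sequence with $\Ocal_{X_s}=\Ocal_X/\pi\Ocal_X$ keeps it exact, yielding
$$0\lra\Kcal\ox\Ocal_{X_s}\lra\Ocal_{D_3\cap X_s}\lra\Ocal_{D_1\cap X_s}\lra 0$$
of coherent sheaves on $X_s$, all with support of dimension at most $d-1$. By the additiveness of the bracket, $[D_3\cap X_s]=[\Kcal\ox\Ocal_{X_s}]+[D_1\cap X_s]$, so it remains to prove $[\Kcal\ox\Ocal_{X_s}]=[D_2\cap X_s]$.

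For this last identity I would compare coefficients at each integral subscheme $Y\subseteq X_s$ of dimension $d-1$, with generic point $\zeta=\xi_Y$. Set $A:=\Ocal_{X,\zeta}$, so that $\pi$ is a nonzerodivisor of $A$. Writing $M:=\Kcal_\zeta$ and $N:=\Ocal_{D_2,\zeta}$, both are finitely generated $\pi$-torsion-free $A$-modules, and the coefficients of $[Y]$ in $[\Kcal\ox\Ocal_{X_s}]$ and in $[D_2\cap X_s]$ are $\ell_A(M/\pi M)$ and $\ell_A(N/\pi N)$ (finite, or else $\zeta$ contributes $0$). On $X_\eta$ we have $\Kcal\cong\Ocal_{D_2}\ox\Ical_{D_1}$, because $\Ical_{D_1}|_{X_\eta}$ is invertible and $\Ical_{D_3}|_{X_\eta}=\Ical_{D_1}|_{X_\eta}\Ical_{D_2}|_{X_\eta}$; this is exactly where the hypothesis that $D_1\cap X_\eta$ is Cartier enters. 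Consequently, for every prime $\mathfrak p$ of $A$ with $\pi\notin\mathfrak p$ the localization $\Ical_{D_1,\mathfrak p}$ is free of rank $1$, whence $M_{\mathfrak p}\cong N_{\mathfrak p}$ as $A_{\mathfrak p}$-modules.

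The main obstacle is then the local statement that $\ell_A(M/\pi M)$ depends only on the modules $M_{\mathfrak p}$ for $\pi\notin\mathfrak p$. Since $\pi$ is a nonzerodivisor on the $S$-flat module $M$ and $M/\pi M$ has finite length, $M$ is one-dimensional and $\pi$ generates a parameter ideal; the associativity formula for multiplicities then gives
$$\ell_A(M/\pi M)=\sum_{\mathfrak p}\ell_{A_{\mathfrak p}}(M_{\mathfrak p})\,\ell_A\big((A/\mathfrak p)/\pi\big),$$
the sum running over the finitely many primes $\mathfrak p$ with $\dim(A/\mathfrak p)=1$, and $S$-flatness of $M$ forces every such $\mathfrak p$ to avoid $\pi$. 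The identical formula holds for $N$, and the local multiplicities coincide since $M_{\mathfrak p}\cong N_{\mathfrak p}$ for $\pi\notin\mathfrak p$. Hence $\ell_A(M/\pi M)=\ell_A(N/\pi N)$ for every $Y$, proving $[\Kcal\ox\Ocal_{X_s}]=[D_2\cap X_s]$ and the proposition. The points requiring care are the vanishing of the $\pi$-torsion contributions, so that only the dimension-one primes survive, and the verification that these primes, being the generic points of $X_\eta$-components specializing to $\zeta$, indeed avoid $\pi$.
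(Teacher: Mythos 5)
Your proof is correct. The engine is the same as the paper's: the associativity formula for multiplicities (\cite{F}, Lemma A.2.7, which is exactly equation \eqref{ell} in the paper's proof) applied at the generic points of the special fibers, combined with the fact that the Cartier hypothesis on $D_1\cap X_\eta$ makes the ideal of $D_1$ invertible at every relevant prime not containing $\pi$. The organization differs slightly: the paper applies the formula to all three ideals $I_1,I_2,I_3$ at a generic point of $D_3\cap X_s$ and reduces to the local identity $\ell(A_{\mathfrak p}/I_3A_{\mathfrak p})=\ell(A_{\mathfrak p}/I_1A_{\mathfrak p})+\ell(A_{\mathfrak p}/I_2A_{\mathfrak p})$, proved by multiplying by a local generator of $I_1A_{\mathfrak p}$; you instead split off $[D_1\cap X_s]$ globally via the $S$-flat sheaf $\Kcal=\Ical_{D_1}/\Ical_{D_3}$ and the additiveness of the bracket, and then apply the multiplicity formula only to $\Kcal_\zeta$ and $\Ocal_{D_2,\zeta}$, matching them prime by prime through $\Kcal|_{X_\eta}\cong\Ical_{D_1}|_{X_\eta}\ox\Ocal_{D_2\cap X_\eta}$ --- the same use of the Cartier hypothesis in sheaf-theoretic form. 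Your version makes transparent that $\pi$-torsion-freeness is all that is needed to kill the correction term in the multiplicity formula and to keep the dimension-one primes of the supports away from $\pi$; the price is the extra verifications that $\Ical_{D_3}\subseteq\Ical_{D_1}$ and that $\Kcal$ is $S$-flat, both of which you handle correctly.
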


(This proposition is a slight generalization of \cite{No}, 
Prop.~5.12, p.~49.)

\begin{proof} For each $i=1,2,3$, 
since $D_i$ is $S$-flat of 
pure codimension 1, also $D_i\cap X_s$ is 
of pure codimension 1 in $X_s$. Again by 
flatness, $D_i$ is the closure of $D_i\cap X_\eta$. 
Thus, from the hypotheses, 
we get that $D_3=D_1\cup D_2$ set-theoretically, and hence
       $$D_3\cap X_s=(D_1\cap X_s)\cup(D_2\cap X_s)$$
set-theoretically.

Let $W\subseteq D_3\cap X_s$ be an irreducible component. 
We need only show that the coefficient of $[W]$ in 
the expression for $[D_3\cap X_s]$ is the sum of those 
for $[D_1\cap X_s]$ and $[D_2\cap X_s]$. 
Let $\zeta\in W$ be 
the generic point, and set $A:=\Ocal_{X,\zeta}$. Let 
$I_1$, $I_2$ and $I_3$ be the respective ideals of $D_1$, 
$D_2$ and $D_3$ in $A$. 

Let $Y_1,\dots,Y_r$ be the 
irreducible components of $D_3$ containing $W$. These 
correspond to the minimal prime ideals 
$\mathfrak p_1,\dots,\mathfrak p_r$ of $A$ containing 
$I_3$. Notice that, for $i=1,2$, since $D_i$ and $D_3$ 
have the same pure dimension, and $D_3\supseteq D_i$, 
the minimal prime ideals of $A$ 
containing $I_i$ are those $\mathfrak p_j$ 
such that $\mathfrak p_j\supseteq I_i$. 

Since $D_i$ is $S$-flat, $\pi$ is a nonzero-divisor of $A/I_i$ 
for each $i=1,2,3$. In particular, 
$\pi{\not\in}\mathfrak p_j$ for any $j$. 
By \cite{G}, Lemme 21.10.17.7, p.~299 or 
\cite{F}, Lemma A.2.7, p.~410, for $i=1,2,3$,
        \begin{equation}\label{ell}
	  \ell(A/(I_i+\pi A))=\sum_{j=1}^r
          \ell(A_{\mathfrak p_j}/I_iA_{\mathfrak p_j})
	  \ell(A/(\mathfrak p_j+\pi A)).
	\end{equation}

The left-hand side of \eqref{ell} 
is the coefficient of $[W]$ in the 
expression for $[D_i\cap X_s]$. Thus, we need only show 
that, for each $j=1,\dots,r$,
        \begin{equation}\label{li}
	  \ell(A_{\mathfrak p_j}/I_3A_{\mathfrak p_j})=
	  \ell(A_{\mathfrak p_j}/I_1A_{\mathfrak p_j})+
          \ell(A_{\mathfrak p_j}/I_2A_{\mathfrak p_j}).
        \end{equation}
Now, since $\pi\not\in{\mathfrak p_j}$, we have 
$I_iA_{\mathfrak p_j}=I_iA_{\pi}A_{\mathfrak p_j}$ for 
$i=1,2,3$. By the hypotheses of the proposition, 
$I_3A_{\pi}=I_1I_2A_{\pi}$, and 
there is a nonzero-divisor $f_j\in A_{\mathfrak p_j}$ such that 
$I_1A_{\mathfrak p_j}=f_jA_{\mathfrak p_j}$. Since $f_j$ is not a 
zero-divisor, multiplication 
by $f_j$ induces a short exact sequence:
        $$0\to\frac{A_{\mathfrak p_j}}{I_2A_{\mathfrak p_j}} \lra
	  \frac{A_{\mathfrak p_j}}{f_jI_2A_{\mathfrak p_j}} \lra
          \frac{A_{\mathfrak p_j}}{f_jA_{\mathfrak p_j}}\to 0.$$
The additiveness 
of the length yields \eqref{li}.
\end{proof}

\section{The main theorem}

\begin{theorem}\setcounter{equation}{0}\label{thm}
Assume that $f\:X\to S$ is flat and $X_s$ has pure dimension. 
Let $D$ be an effective Cartier divisor of $X$. 
Suppose that, for each $i=1,\dots,n$, 
there are effective Cartier divisors $E_i$ and $F_i$ of $X$ 
and a nonnegative integer $p_i$ such that 
$\xi_i\not\in E_i+F_i$ and $D+E_i=p_iX_s+F_i$. Then
      $$[\lim D]=\sum_{i=1}^n\Big([F_i\cap C_i]
        -[E_i\cap C_i]\Big).$$
\end{theorem}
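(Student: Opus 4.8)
The plan is to realize $[\lim D]$ as the bracket of a single quotient of torsion-free rank-$1$ sheaves, to break it up component by component using Lemma~\ref{prip}, and then to identify each component's contribution with $[F_i\cap C_i]-[E_i\cap C_i]$ by means of the hypothesis $D+E_i=p_iX_s+F_i$. First I would replace $D$ by its flat limit $\wt D:=\ol{D\cap X_\eta}^X$. As $\wt D$ is $S$-flat and contains no component of $X_s$, the ideal sheaf $\Ical_{\wt D}$ is $S$-flat and satisfies $(\Ical_{\wt D})_{\xi_i}=\Ocal_{X,\xi_i}$ for every $i$; in particular $\Ical_{\wt D}$ is torsion-free of rank $1$ on $X/S$ and agrees with $\Ocal_X$ at all the $\xi_i$. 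A short check using $S$-flatness shows that $\Ical_{\wt D}|_{X_s}\hookrightarrow\Ocal_{X_s}$ with image the ideal of $\lim D$, so by Proposition~\ref{prop}(1),
\[
[\lim D]=\Big[(\Ocal_X)_{X_s}/(\Ical_{\wt D})_{X_s}\Big].
\]

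Next I would apply Lemma~\ref{prip} with $\Fcal:=\Ocal_X$, $\Gcal:=\Ical_{\wt D}$ and the full list $C_1,\dots,C_n$ (so $Z_m=X_s$), the hypothesis $\Fcal_{\xi_i}=\Gcal_{\xi_i}$ being exactly the agreement noted above. Writing $Z_{i-1}:=C_1\cup\cdots\cup C_{i-1}$, this gives
\[
[\lim D]=\sum_{i=1}^n T_i,\qquad T_i:=\Big[\Ocal_X(-Z_{i-1})_{C_i}\big/\Ical_{\wt D}(-Z_{i-1})_{C_i}\Big],
\]
and it then suffices to prove $T_i=[F_i\cap C_i]-[E_i\cap C_i]$ for each $i$. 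To bring in the hypothesis I would translate $D+E_i=p_iX_s+F_i$ into $\Ical_D\Ical_{E_i}=\pi^{p_i}\Ical_{F_i}$ and saturate with respect to $\pi$, obtaining the invertible-twist identity $\Ical_{\wt D}\ox\Ical_{E_i}=\Ical_{E_i}\cap\Ical_{F_i}$ as subsheaves of $\Ocal_X$. Since restriction-mod-torsion and the modifications $(-Y)$ all commute with tensoring by the invertible sheaf $\Ical_{E_i}$, and the bracket is insensitive to such a twist, I may rewrite
\[
T_i=\Big[\Ical_{E_i}(-Z_{i-1})_{C_i}\big/(\Ical_{E_i}\cap\Ical_{F_i})(-Z_{i-1})_{C_i}\Big].
\]

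The decisive point is how effectivity forces a clean restriction along $C_i$. At a codimension-$1$ point $\zeta$ of $C_i$ lying on no other component the modification by $Z_{i-1}$ is trivial, and since $\xi_i\notin E_i$ the sheaf $\Ical_{E_i}$ is $S$-flat there, so $\pi$ is a nonzerodivisor modulo $\Ical_{E_i}$; from $\pi^{p_i}\Ical_{F_i}=\Ical_D\Ical_{E_i}\subseteq\Ical_{E_i}$ one then cancels the power of $\pi$ to conclude $\Ical_{F_i}\subseteq\Ical_{E_i}$ locally. Hence $\Ical_{\wt D}=\Ical_{F_i}\ox\Ical_{E_i}^{-1}$ is an honest invertible ideal near $\zeta$, whose restriction to $C_i$ is the Cartier divisor $(F_i-E_i)\cap C_i$; splitting the quotient above by the additiveness of the bracket and the exact sequences of Proposition~\ref{prop} — using the $S$-flatness of $\Ocal_{E_i\cap C_i}$ to kill the relevant $\mathrm{Tor}$ — yields exactly $[F_i\cap C_i]-[E_i\cap C_i]$ at $\zeta$.

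I expect the main obstacle to be the points $\zeta$ lying on several components $C_j$. There the modifications $(-Z_{i-1})$ genuinely act, the divisors $E_i$ and $F_i$ may contain the other $C_j$ (so they need not be $S$-flat at $\zeta$), and one must show that forming the $C_i$-restriction after modifying by $Z_{i-1}$ strips off precisely the contributions of the components $C_j$ with $j<i$, while the torsion-free restriction to $C_i$ discards those with $j>i$, leaving only the honest Cartier restriction to $C_i$. This is exactly the bookkeeping that Proposition~\ref{prep}(3) and the short exact sequences of Proposition~\ref{prop} are built to control, and carrying it out so that the cancellation along each $C_j$ with $j\neq i$ is complete — and so that the effectivity-driven divisibility $\Ical_{F_i}\subseteq\Ical_{E_i}$ persists after modification — is the technical heart of the argument.
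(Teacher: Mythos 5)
Your opening moves are sound and in fact coincide with the paper's: the sheaf $\Jcal$ the paper constructs satisfies $\Jcal|_{X_\eta}=\Ical_{D\cap X_\eta}$ and has $S$-flat cokernel in $\Ocal_X$, so it is exactly your $\Ical_{\wt D}$, and your identity $[\lim D]=\sum_iT_i$ with $T_i=\big[\Ocal_X(-Z_{i-1})_{C_i}/\Ical_{\wt D}(-Z_{i-1})_{C_i}\big]$ is precisely the paper's application of Lemma~\ref{prip}. The proof collapses at the next step: the per-component identity $T_i=[F_i\cap C_i]-[E_i\cap C_i]$ to which you reduce is false. Each $T_i$ is the bracket of a coherent sheaf, hence an effective cycle, whereas $[F_i\cap C_i]-[E_i\cap C_i]$ need not be effective. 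Concretely, take $X=\Spec\big(k[[t]][x,y]/(xy-t)\big)$, so $C_1=\{x=0\}$ and $C_2=\{y=0\}$, and let $D$ be cut by $x+t=x(1+y)$. The hypotheses hold with $E_1=\{y=0\}$, $F_1=\{1+y=0\}$, $p_1=1$, and $E_2=\emptyset$, $F_2=D$, $p_2=0$. Here $\Ical_{\wt D}=(1+y)$, so $T_1=[(0,-1)]$ and $T_2=0$, while $[F_1\cap C_1]-[E_1\cap C_1]=[(0,-1)]-[(0,0)]$ and $[F_2\cap C_2]-[E_2\cap C_2]=[(0,0)]$. Only the sum over $i$ is correct: the discrepancies are supported on $C_1\cap C_2$ and cancel between the two indices. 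Arranging this cancellation globally is not bookkeeping that Propositions~\ref{prep} and~\ref{prop} perform automatically; it is the actual content of the paper's proof, namely the inductive Claim that $\theta_1(\gamma)+\cdots+\theta_n(\gamma)=0$, for which you have no counterpart.

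A second, independent error: the identity $\Ical_{\wt D}\ox\Ical_{E_i}=\Ical_{E_i}\cap\Ical_{F_i}$ that you extract by ``saturating'' $\Ical_D\Ical_{E_i}=\pi^{p_i}\Ical_{F_i}$ does not follow and is generally false. Saturation with respect to $\pi$ enlarges ideals, so the saturation of $\pi^{p_i}\Ical_{F_i}$ contains $\Ical_{F_i}$ rather than the smaller ideal $\Ical_{E_i}\cap\Ical_{F_i}$, and the saturation of a product is not the product of the saturations. In the example above with $i=2$ (where $E_2=\emptyset$, $p_2=0$, $F_2=D$) your identity would read $\Ical_{\wt D}=\Ical_D$, i.e., $\wt D=D$, which fails exactly when $D$ contains components of $X_s$ --- the situation the theorem is designed for. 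To salvage the approach you would need to keep the relation in the twisted form the paper uses, $(\Jcal\Ical_{E_i})^{\alpha_i}=\Ical_{F_i}^{\beta_i}$ for suitable twisters $\alpha_i,\beta_i$, and then confront the cross-term cancellation directly.
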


\begin{proof} Let $\gamma=\sum_ir_iC_i$ 
be an effective twister for which 
$\Ical_D\subseteq\Ocal_X^\gamma$. Suppose 
$r:=r_1+\cdots+r_n$ is maximal for this property. (Such $\gamma$ exists 
because $\ell(\Ocal_{D,\xi_i})<\infty$ for each $i=1,\dots,n$.)
Then, since $X$ is $S$-flat,
      $$\Ical_D(-\textstyle\sum_i\sum_{j\neq i} r_iC_j)
        \subseteq\Ocal_X(-r(C_1+\cdots+C_n))=
	\pi^r\Ocal_X.$$
Let 
      $$\Jcal:=\Bigg(
        \Ical_D\Big(-\sum_{i=1}^n\sum_{j\neq i} r_iC_j\Big):
	\pi^r\Ocal_X\Bigg)\subseteq\Ocal_X.$$
Then $\Jcal^\gamma=\Ical_D$. Note that, 
since $\Ical_D$ is invertible, and hence 
torsion-free of rank 1 on $X/S$, also $\Jcal$ is 
torsion-free of rank 1 by Proposition~\ref{prop}. In 
addition, $\Jcal\not\subseteq\Ocal_X(-C_i)$ for every 
$i=1,\dots,n$, by the maximality of $r$. After 
reordering the components $C_i$, we may assume that
      \begin{equation}\label{orderr}
	r_1\geq r_2\geq\cdots\geq r_{n-1}\geq r_n.
      \end{equation}

If $\Kcal$ is a sheaf of ideals and $G$ is a 
Cartier divisor of $X$, then 
the multiplication map $\Kcal\ox\Ical_G\to\Kcal\Ical_G$ 
is an isomorphism. Thus, for each $\mu\in\Twist^+(f)$, 
since 
$(\Kcal\ox\Ical_G)^\mu=\Kcal^\mu\ox\Ical_G$ 
as subsheaves of $\Kcal\ox\Ical_G$, and since the multiplication 
map carries $\Kcal^\mu\ox\Ical_G$ onto 
$\Kcal^\mu\Ical_G$ and $(\Kcal\ox\Ical_G)^\mu$ 
onto $(\Kcal\Ical_G)^\mu$, we have
      \begin{equation}\label{IIG}
        \Kcal^\mu\Ical_G=(\Kcal\Ical_G)^\mu.
      \end{equation}
In addition, if $G\cap Y$ is Cartier for a certain 
primary subscheme $Y$, then $\Ical_G|_Y=\Ical_{G\cap Y|Y}$, 
and it follows that
      \begin{equation}\label{IIGY}
	(\Kcal\Ical_G)_Y=\Kcal_Y\Ical_{G\cap Y|Y}.
      \end{equation}

By the hypothesis of the theorem, for each $i=1,\dots,n$,
      $$\pi^{p_i}\Ical_{F_i}=\Ical_{p_iX_s+F_i}=
        \Ical_{D+E_i}=\Ical_D\Ical_{E_i}=
	\Jcal^{\gamma}\Ical_{E_i}.$$
Using Equation \eqref{IIG}, we get that
$\pi^{p_i}\Ical_{F_i}=(\Jcal\Ical_{E_i})^\gamma$.
Now, since $\xi_i\not\in E_i$ and $\Jcal\not\subseteq\Ocal_X(-C_i)$, we get
that $r_i$ is the largest integer $j$ such that 
$(\Jcal\Ical_{E_i})^\gamma\subseteq\Ocal_X(-jC_i)$. 
On the other hand, since also $\xi_i\not\in F_i$, 
we get that $p_i$ is the largest integer 
$j$ such that 
$\pi^{p_i}\Ical_{F_i}\subseteq\Ocal_X(-jC_i)$. Since 
$\pi^{p_i}\Ical_{F_i}=(\Jcal\Ical_{E_i})^\gamma$, we 
have $p_i=r_i$. Putting
       $$\alpha_i:=\sum_{j<i}(r_j-r_i)C_j\quad
         \text{and}\quad
	 \beta_i:=\sum_{j>i}(r_i-r_j)C_j,$$
we get
      \begin{equation}\label{JEFBA}
	(\Jcal\Ical_{E_i})^{\alpha_i}=
        \Ical_{F_i}^{\beta_i}.
      \end{equation}

Let 
      \begin{equation}\label{gamma'}
	\gamma':=\sum_jr'_jC_j,\quad\text{where }r'_j:=r_1-r_j\text{ for each }
	j=1,\dots,n,
      \end{equation}
and set 
      \begin{equation}\label{delta}
	\delta_i:=\sum_{j<i}r'_jC_j+\sum_{j\geq i}r'_iC_j,\\
	\quad\text{and}\quad
        \epsilon_i:=\sum_{j< i}r'_iC_j+\sum_{j\geq i}r'_jC_j.
      \end{equation}
Then $\alpha_i+\delta_i=r'_i(C_1+\cdots+C_n)$ and $\beta_i+\delta_i=\gamma'$. 
Since $\Jcal\Ical_{E_i}$ is torsion-free of rank 1, 
it follows from \eqref{JEFBA} that
        \begin{equation}\label{JEF}
	  \begin{aligned}
	    \pi^{r'_i}\Jcal\Ical_{E_i}=&
            (\Jcal\Ical_{E_i})^{\alpha_i+\delta_i}=
            ((\Jcal\Ical_{E_i})^{\alpha_i})^{\delta_i}\\
	    =&(\Ical_{F_i}^{\beta_i})^{\delta_i}=
            \Ical_{F_i}^{\beta_i+\delta_i}
	    =\Ical_{F_i}^{\gamma'}.
	  \end{aligned}
	\end{equation}
Notice, for later use, that $\epsilon_i=\alpha_i+\gamma'$.

Since $\xi_i\not\in E_i+F_i$, and since
$C_i$ is not a summand of $\alpha_i$ or 
$\beta_i$, it follows from \eqref{JEFBA} that
      $$\Jcal_{\xi_i}=(\Jcal\Ical_{E_i})_{\xi_i}=
        ((\Jcal\Ical_{E_i})^{\alpha_i})_{\xi_i}
        =(\Ical_{F_i}^{\beta_i})_{\xi_i}
	=(\Ical_{F_i})_{\xi_i}
	=\Ocal_{X,\xi_i}.$$
Since this holds for each $i=1,\dots,n$, and since 
$\Jcal$ is torsion-free on $X/S$, it 
follows that the induced 
map $\Jcal|_{X_s}\to\Ocal_{X_s}$ is injective. So the 
inclusion $\Jcal\to\Ocal_X$ has flat cokernel. 
Since $\Jcal^\gamma=\Ical_D$, we have 
that $\Jcal|_{X_\eta}$ is the sheaf of ideals of  
$D\cap X_\eta$ in $X_\eta$. Thus $\lim D$ is the subscheme of $X_s$ 
with ideal sheaf $\Jcal|_{X_s}$, and hence
       \begin{equation}\label{limD}
	 [\lim D]=\bigg[\frac{\Ocal_{X_s}}
         {\Jcal|_{X_s}}\bigg].
       \end{equation}

Set $Z_1:=\emptyset$ and, for each $j=2,\dots,n+1$, put 
$Z_j:=C_1\cup\cdots\cup C_{j-1}$. By 
Lemma~\ref{prip}, 
        $$\bigg[\frac{\Ocal_{X_s}}{\Jcal|_{X_s}}\bigg]=
          \sum_{i=1}^n\bigg[\frac{\Ocal_X(-Z_i)_{C_i}}
          {\Jcal(-Z_i)_{C_i}}\bigg],$$
and thus, by \eqref{limD} and the additiveness of the bracket,
      \begin{equation}\label{OJ1}
	[\lim D]=\sum_{i=1}^n\Bigg(
	\bigg[\frac{\Ocal_{C_i}}{\Jcal_{C_i}}\bigg]-
	\bigg[\frac{\Ocal_{C_i}}{\Ocal_X(-Z_i)_{C_i}}\bigg]
	+\bigg[\frac{\Jcal_{C_i}}{\Jcal(-Z_i)_{C_i}}\bigg]
	\Bigg).
      \end{equation}

      Now, using \eqref{IIG}, \eqref{IIGY} and 
\eqref{JEF}, and using that both $C_i\cap E_i$ and $C_i\cap F_i$ are Cartier, 
we get
      \begin{align*}
	\bigg[\frac{\Jcal_{C_i}}
        {\Jcal(-Z_i)_{C_i}}\bigg]=&
        \bigg[\frac{\Jcal_{C_i}\Ical_{E_i\cap C_i|C_i}}
        {\Jcal(-Z_i)_{C_i}\Ical_{E_i\cap C_i|C_i}}\bigg]
	=\bigg[\frac{(\Jcal\Ical_{E_i})_{C_i}}
	{(\Jcal\Ical_{E_i})(-Z_i)_{C_i}}\bigg]\\
	=&\bigg[\frac{(\pi^{r'_i}\Jcal\Ical_{E_i})_{C_i}}
	{(\pi^{r'_i}\Jcal\Ical_{E_i})(-Z_i)_{C_i}}\bigg]
        =\bigg[\frac{(\Ical_{F_i}^{\gamma'})_{C_i}}
	{\Ical_{F_i}^{\gamma'}(-Z_i)_{C_i}}\bigg]\\
	=&\bigg[\frac{(\Ocal_X^{\gamma'}\Ical_{F_i})_{C_i}}
	{(\Ocal_X^{\gamma'}\Ical_{F_i})(-Z_i)_{C_i}}
	\bigg]
	=\bigg[\frac{(\Ocal_X^{\gamma'})_{C_i}\Ical_{F_i\cap C_i|C_i}}
        {\Ocal_X^{\gamma'}(-Z_i)_{C_i}\Ical_{F_i\cap C_i|C_i}}\bigg]\\
	=&\bigg[\frac{(\Ocal_X^{\gamma'})_{C_i}}
        {\Ocal_X^{\gamma'}(-Z_i)_{C_i}}\bigg]
      \end{align*}
Thus \eqref{OJ1} becomes
      \begin{equation}\label{OJ}
	[\lim D]=\sum_{i=1}^n\Bigg(\bigg[\frac{\Ocal_{C_i}}
	{\Jcal_{C_i}}\bigg]-
        \bigg[\frac{\Ocal_{C_i}}
        {\Ocal_X(-Z_i)_{C_i}}\bigg]+
        \bigg[\frac{(\Ocal_X^{\gamma'})_{C_i}}
        {\Ocal_X^{\gamma'}(-Z_i)_{C_i}}\bigg]\Bigg).
      \end{equation}

Using a similar reasoning, and the additiveness of the bracket,
      \begin{align*}
	\bigg[\frac{\Ocal_{C_i}}{\Jcal_{C_i}}\bigg]=&
	\bigg[\frac{(\Ical_{E_i})_{C_i}}
	{(\Jcal\Ical_{E_i})_{C_i}}\bigg]=
	\bigg[\frac{\Ocal_{C_i}}
	{(\Jcal\Ical_{E_i})_{C_i}}\bigg]-[E_i\cap C_i]\\
	=&\bigg[\frac{(\pi^{r'_i}\Ocal_X)_{C_i}}
	{(\pi^{r'_i}\Jcal\Ical_{E_i})_{C_i}}\bigg]
	-[E_i\cap C_i]
	=\bigg[\frac{(\pi^{r'_i}\Ocal_X)_{C_i}}
	{(\Ical_{F_i}^{\gamma'})_{C_i}}\bigg]
	-[E_i\cap C_i]\\
	=&[F_i\cap C_i]
	+\bigg[\frac{(\pi^{r'_i}\Ical_{F_i})_{C_i}}
	{(\Ical_{F_i}^{\alpha_i+\gamma'})_{C_i}}\bigg]
	-\bigg[\frac{(\Ical_{F_i}^{\gamma'})_{C_i}}
	{(\Ical_{F_i}^{\alpha_i+\gamma'})_{C_i}}\bigg]
	-[E_i\cap C_i]\\
	=&[F_i\cap C_i]-[E_i\cap C_i]
	+\bigg[\frac{(\pi^{r'_i}\Ocal_X)_{C_i}}
	{(\Ocal_X^{\epsilon_i})_{C_i}}\bigg]
	-\bigg[\frac{(\Ocal_X^{\gamma'})_{C_i}}
	{(\Ocal_X^{\epsilon_i})_{C_i}}\bigg],
      \end{align*}
where we used that $\epsilon_i=\alpha_i+\gamma'$. 
Substituting in \eqref{OJ}, 
we see that we need only prove the following claim.

\vskip0.2cm

{\it Claim:} Let $\gamma:=\sum_ir_iC_i$ be an effective 
twister such that \eqref{orderr} holds. Let $\gamma'$ be as in 
\eqref{gamma'}. For each 
$i=1,\dots,n$, let $\epsilon_i\in\Twist(f)$ be as 
in \eqref{delta}, and put
      $$\theta_i(\gamma):=
        \bigg[\frac{(\pi^{r'_i}\Ocal_X)_{C_i}}
	{(\Ocal_X^{\epsilon_i})_{C_i}}\bigg]
        -\bigg[\frac{(\Ocal_X^{\gamma'})_{C_i}}
	{(\Ocal_X^{\epsilon_i})_{C_i}}\bigg]-\bigg[
        \frac{\Ocal_{C_i}}{\Ocal_X(-Z_i)_{C_i}}\bigg]
	+\bigg[\frac{(\Ocal_X^{\gamma'})_{C_i}}
        {\Ocal_X^{\gamma'}(-Z_i)_{C_i}}\bigg].$$
Then $\theta_1(\gamma)+\cdots+\theta_n(\gamma)=0$.

\vskip0.2cm

We will prove the claim by induction on the 
sum $r':=r'_1+\cdots+r'_n$. If $r'=0$, then $\gamma'=0$ and 
$\epsilon_i=0$ for each $i=1,\dots,n$. 
The claim is trivial in this case, as the first and 
second summands of $\theta_i(\gamma)$ are zero, and the third 
and fourth cancel each other, for each $i=1,\dots,n$.

Now, suppose $r'>0$. Then one of the inequalities in 
\eqref{orderr} is strict. Let 
$\ell$ be an integer, between 2 and $n$, such that 
$r_{\ell-1}>r_\ell$. For each $i=1,\dots,n$, let 
$t_i:=r_i$ if $i\neq\ell$ and $t_\ell:=r_\ell+1$. Then 
$t_1\geq\cdots\geq t_n$ as well. Set $\tau:=\sum_it_iC_i$ and 
$\tau':=\sum_it'_iC_i$, where $t'_i:=t_1-t_i$ for each $i=1,\dots,n$. Then 
$t'_i=r'_i$ for every $i\neq\ell$, but $t'_\ell=r'_\ell-1$. So, by 
induction, $\theta_1(\tau)+\cdots+\theta_n(\tau)=0$.

Set
$$
\rho_i:=\sum_{j<i}t'_iC_j+\sum_{j\geq i}t'_jC_j\quad\text{for $i=1,\dots,n$.}
$$
Then $\gamma'=\tau'+C_\ell$ and
      $$\epsilon_i=\begin{cases}
	\rho_i+C_\ell           &\text{if $i<\ell$,}\\
	\rho_\ell+C_1+\cdots+C_\ell&\text{if $i=\ell$,}\\
	\rho_i                  &\text{if $i>\ell$.}
	\end{cases}$$
Using the above formulas and the additiveness 
of the bracket, we get
      $$\theta_i(\gamma)-\theta_i(\tau)=\bigg[
        \frac{\Ocal_X^{\tau'}(-Z_i)_{C_i}}
	{\Ocal_X^{\gamma'}(-Z_i)_{C_i}}\bigg]
        \quad\text{if $i\neq\ell$,}$$
and
      \begin{align*}
	\theta_\ell(\gamma)-\theta_\ell(\tau)=&
	\bigg[\frac{(\pi^{r'_\ell}\Ocal_X)_{C_\ell}}
	{(\Ocal_X^{\epsilon_\ell})_{C_\ell}}\bigg]-
	\bigg[\frac{(\pi^{t'_\ell}\Ocal_X)_{C_\ell}}
	{(\Ocal_X^{\rho_\ell})_{C_\ell}}\bigg]
	-\bigg[\frac{(\Ocal_X^{\gamma'})_{C_\ell}}
	{(\Ocal_X^{\epsilon_\ell})_{C_\ell}}\bigg]\\
	&+\bigg[\frac{(\Ocal_X^{\tau'})_{C_\ell}}
	{(\Ocal_X^{\rho_\ell})_{C_\ell}}\bigg]
	+\bigg[\frac{(\Ocal_X^{\gamma'})_{C_\ell}}
        {\Ocal_X^{\gamma'}(-Z_\ell)_{C_\ell}}\bigg]
	-\bigg[\frac{(\Ocal_X^{\tau'})_{C_\ell}}
        {\Ocal_X^{\tau'}(-Z_\ell)_{C_\ell}}\bigg]\\
	=&\bigg[\frac{(\pi^{r'_\ell}\Ocal_X)_{C_\ell}}
	{(\Ocal_X^{\epsilon_\ell})_{C_\ell}}\bigg]-
	\bigg[\frac{(\pi^{r'_\ell}\Ocal_X)_{C_\ell}}
	{(\pi\Ocal_X^{\rho_\ell})_{C_\ell}}\bigg]
	-\bigg[\frac{(\Ocal_X^{\gamma'})_{C_\ell}}
	{(\Ocal_X^{\epsilon_\ell})_{C_\ell}}\bigg]\\
	&+\bigg[\frac{(\pi\Ocal_X^{\tau'})_{C_\ell}}
	{(\pi\Ocal_X^{\rho_\ell})_{C_\ell}}\bigg]
	+\bigg[\frac{(\Ocal_X^{\gamma'})_{C_\ell}}
        {\Ocal_X^{\gamma'}(-Z_\ell)_{C_\ell}}\bigg]
	-\bigg[\frac{(\Ocal_X^{\tau'})_{C_\ell}}
        {\Ocal_X^{\tau'}(-Z_\ell)_{C_\ell}}\bigg]\\
	=&-\bigg[
	\frac{\Ocal_X^{\gamma'}(-Z_\ell)_{C_\ell}}
	{(\pi\Ocal_X^{\tau'})_{C_\ell}}\bigg]
	-\bigg[\frac{(\Ocal_X^{\tau'})_{C_\ell}}
        {\Ocal_X^{\tau'}(-Z_\ell)_{C_\ell}}\bigg].
      \end{align*}
Thus, since $\theta_1(\tau)+\cdots+\theta_n(\tau)=0$ by induction, 
we need only show that
        \begin{equation}\label{0=0}
	  \sum_{i\neq\ell}\bigg[
          \frac{\Ocal_X^{\tau'}(-Z_i)_{C_i}}
	  {\Ocal_X^{\gamma'}(-Z_i)_{C_i}}\bigg]=
          \bigg[\frac{\Ocal_X^{\gamma'}(-Z_\ell)_{C_\ell}}
	  {(\pi\Ocal_X^{\tau'})_{C_\ell}}\bigg]
	  +\bigg[\frac{(\Ocal_X^{\tau'})_{C_\ell}}
          {\Ocal_X^{\tau'}(-Z_\ell)_{C_\ell}}\bigg].
	\end{equation}

Now, applying Lemma~\ref{prip} twice, we get that
        \begin{align*}
	  \sum_{i\neq\ell}\bigg[
          \frac{\Ocal_X^{\tau'}(-Z_i)_{C_i}}
	  {\Ocal_X^{\gamma'}(-Z_i)_{C_i}}\bigg]
	  &=\sum_{i<\ell}\bigg[
          \frac{\Ocal_X^{\tau'}(-Z_i)_{C_i}}
	  {\Ocal_X^{\gamma'}(-Z_i)_{C_i}}\bigg]
	  +\sum_{i>\ell}\bigg[
          \frac{\Ocal_X^{\tau'}(-Z_i)_{C_i}}
	  {\Ocal_X^{\gamma'}(-Z_i)_{C_i}}\bigg]\\
	  &=\bigg[\frac{(\Ocal_X^{\tau'})_{Z_\ell}}
	  {(\Ocal_X^{\gamma'})_{Z_\ell}}\bigg]
	  +\bigg[\frac{\Ocal_X^{\tau'}
	  (-Z_{\ell+1})_{Z^c_{\ell+1}}}
	  {\Ocal_X^{\gamma'}(-Z_{\ell+1})_{Z^c_{\ell+1}}}
	  \bigg].
	  \end{align*}
In addition, since $\gamma'=\tau'+C_\ell$, it follows 
from the second statement of Proposition \ref{prep} that
          \begin{align*}
	    \bigg[\frac{(\Ocal_X^{\tau'})_{Z_\ell}}
	    {(\Ocal_X^{\gamma'})_{Z_\ell}}\bigg]
	    =&\bigg[\frac{(\Ocal_X^{\tau'})_{C_\ell}}
	    {\Ocal_X^{\tau'}(-Z_\ell)_{C_\ell}}\bigg],\\
	    \bigg[\frac{\Ocal_X^{\tau'}
	    (-Z_{\ell+1})_{Z^c_{\ell+1}}}
	    {\Ocal_X^{\gamma'}(-Z_{\ell+1})_{Z^c_{\ell+1}}}
	    \bigg]
	    =&\bigg[
	    \frac{\Ocal_X^{\tau'}(-Z_{\ell+1})_{C_\ell}}
	    {(\pi\Ocal_X^{\tau'})_{C_\ell}}\bigg].
	  \end{align*}
Equation \eqref{0=0} follows.
\end{proof}

\begin{proposition}\label{redcartier}
\setcounter{equation}{0}  
Assume that $f\: X\to S$ is 
projective and flat, and that 
$S$ is a $k$-scheme for an infinite 
field $k$. Then the following two statements hold:
\begin{enumerate}
\item For each $i=1,\dots,n$ there is 
$G_i\in\Div^+(X)$ such that $\xi_j\not\in G_i$ for 
$j\neq i$ and $G_i$ coincides with $C_i$ at $\xi_i$.
\item If $X_s$ is reduced, for each closed subscheme 
$D$ of $X$ such that $D\cap X_\eta$ is a Cartier 
divisor there are divisors $E_i\in\Div^+(X)$ with 
$\xi_i\not\in E_i$ and nonnegative integers $p_i$ 
such that $D+E_i\geq p_iX_s$ and 
$\xi_i\not\in (D+E_i)-p_iX_s$ for $i=1,\dots,n$.
\end{enumerate}
\end{proposition}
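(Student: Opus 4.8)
The plan is to deduce both statements from a single general-position construction, exploiting projectivity of $f$ to fix an $f$-ample invertible sheaf $\Lcal$ on $X$. The construction I would isolate as a lemma is this: let $\Kcal\subseteq\Ocal_X$ be a coherent sheaf of ideals, and let $P$ be a finite set of points of $X$ containing every associated point of $\Ocal_X$ and at each of which the stalk $\Kcal_\xi$ is free of rank $1$ over $\Ocal_{X,\xi}$. By Serre's theorem $\Kcal\ox\Lcal^{\ox m}$ is globally generated for $m\gg0$, and for such $m$ global generation makes the condition ``$s$ vanishes at $\xi$'' cut out a proper $k$-linear subspace of $H^0(X,\Kcal\ox\Lcal^{\ox m})$ for each $\xi\in P$. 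Since $k$ is infinite, a general section $s$ avoids this finite union of subspaces; such an $s$ vanishes at no associated point of $\Ocal_X$, hence is locally a nonzero-divisor, so its zero scheme $E:=V(s)$ lies in $\Div^+(X)$ with $\Ical_E\subseteq\Kcal$, and by Nakayama it generates the free rank-$1$ stalk $\Kcal_\xi$ at each $\xi\in P$, giving $\Ical_{E,\xi}=\Kcal_\xi$.

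Before applying this I would record the one input from flatness: since $f$ is flat, $\pi$ is a nonzero-divisor on $\Ocal_X$, so every associated point of $\Ocal_X$ lies off $X_s$. Consequently any ideal sheaf that is the unit ideal away from $X_s$ is automatically free of rank $1$ at all associated points of $X$, so that hypothesis of the lemma is met there for free.

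For statement (1) I would apply the lemma with $\Kcal=\Ical_{C_i}$ and $P=\{\xi_1,\dots,\xi_n\}\cup\mathrm{Ass}(\Ocal_X)$. Localizing the primary decomposition of $(0)$ in $\Ocal_{X_s}$ at $\xi_i$ shows that $C_i$ coincides with $X_s$ at $\xi_i$, so $\Ical_{C_i,\xi_i}=(\pi)$, which is free of rank $1$ as $\pi$ is a nonzero-divisor; at the other points of $P$ the ideal $\Ical_{C_i}$ is the unit ideal. The lemma then gives $G_i:=E\in\Div^+(X)$ with $\Ical_{G_i,\xi_i}=\Ical_{C_i,\xi_i}$, so $G_i$ coincides with $C_i$ at $\xi_i$, and $\Ical_{G_i,\xi_j}=\Ocal_{X,\xi_j}$ for $j\neq i$, so $\xi_j\notin G_i$.

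For statement (2) the extra ingredient is reducedness of $X_s$, and this is where I expect the only real difficulty. Reducedness forces $\Ocal_{X_s,\xi_i}=\Ocal_{X,\xi_i}/(\pi)$ to be a field, so $(\pi)$ is the maximal ideal of the one-dimensional local ring $\Ocal_{X,\xi_i}$; being principal and generated by a nonzero-divisor, this ring is a discrete valuation ring with uniformizer $\pi$. Since $D\cap X_\eta$ is Cartier, $D$ contains no component of $X$, so $\Ical_{D,\xi_i}\neq0$ and hence equals $(\pi^{e_i})$ for a unique integer $e_i\geq0$. I would set $p_i:=e_i$ and $\Kcal_i:=(\pi^{e_i}\Ocal_X:\Ical_D)$; this ideal is the unit ideal at $\xi_i$ and, being trivial off $X_s$, at every associated point of $X$, so the lemma yields $E_i:=E\in\Div^+(X)$ with $\xi_i\notin E_i$ and $\Ical_{E_i}\subseteq\Kcal_i$. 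The inclusion gives $\Ical_D\Ical_{E_i}\subseteq\pi^{p_i}\Ocal_X$, that is $D+E_i\geq p_iX_s$, while $\xi_i\notin E_i$ gives $(\Ical_D\Ical_{E_i})_{\xi_i}=(\pi^{e_i})$, so the residual $(D+E_i)-p_iX_s$ has unit ideal at $\xi_i$ and avoids it. The crux throughout is this valuation-ring identification: without reducedness $\Ical_{D,\xi_i}$ need not be a power of $(\pi)$, and an integer $p_i$ matching both the containment and the residual condition need not exist, which is exactly why the statement is restricted to reduced $X_s$.
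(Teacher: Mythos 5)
Your argument is correct, and for statement (1) it is essentially the paper's: there too one produces $G_i$ by choosing, for each of $\xi_1,\dots,\xi_n$ and each associated point of $X_\eta$, a degree-$d$ form generating $\Ical_{C_i}(d)$ at that point (after arranging $H^1(\IP^m_R,\Ical_{X|\IP^m_R}(d))=0$ and global generation of $\Ical_{C_i}(d)$), and then taking a general $k$-linear combination; your packaging via an $f$-ample $\Lcal$ and Serre's theorem is the same infinite-field avoidance argument in slightly cleaner form. The genuine divergence is in statement (2). The paper defines $p_j$ by $\Ical_{D,\xi_j}=\mathfrak m_{\xi_j}^{p_j}$ exactly as you do (identifying $\Ocal_{X,\xi_j}$ as a discrete valuation ring via the nonsingular locus of the reduced fiber, where you argue more intrinsically that $(\pi)$ is the maximal ideal), but then it does not rerun the general-position construction: it sets $E_i:=\sum_{p_j<p_i}(p_i-p_j)G_j$ using the divisors $G_j$ already obtained in statement (1), and checks $D+E_i\geq p_iX_s$ at the points $\xi_j$, which suffices because $X_s$ has no embedded components. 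You instead apply your lemma directly to the conductor $(\pi^{p_i}\Ocal_X:\Ical_D)$, which is the unit ideal at $\xi_i$ and away from $X_s$. Both routes are valid: the paper's makes (2) an immediate corollary of (1) and keeps the $E_i$ concrete as combinations of fixed divisors, which is convenient in the examples; yours is more uniform, rests on a single lemma, and does not use (1) in proving (2). One point worth keeping explicit in your lemma: at an associated point of $\Ocal_X$ a free rank-$1$ ideal is automatically the unit ideal (a generator is a nonzero-divisor, hence a unit there), and this is what guarantees that $V(s)$ misses every associated point and is Cartier; you verify it in both applications, so there is no gap.
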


\begin{proof} Let $R$ be the ring of regular functions 
of $S$. Since $S$ is a $k$-scheme, $R$ is a $k$-algebra. 
Since $f$ is projective, $f$ 
factors through an embedding $\iota\:X\to\IP^m_R$, 
where $\IP^m_R:=\text{Proj}(R[t_0,\dots,t_m])$. 
Let $\Ocal_X(1)$ be the restriction to $X$ of the tautological ample 
sheaf of $\IP^m_R$. Then there is an 
integer $d>0$ such that 
$H^1(\IP^m_R,\Ical_{X|\IP^m_R}(d))=0$ and the $d$-th twist 
$\Ical_{C_i}(d)$ of the sheaf of ideals of $C_i$ in $X$ is 
globally generated for every $i=1,\dots,n$.

Let $\xi_{n+1},\dots,\xi_{n+r}$ denote the associated 
points of $X_\eta$. Then 
$\Ical_{C_i}$ is invertible at $\xi_j$ for 
each $j=1,\dots,n+r$. Indeed, this is clearly so 
if $j\neq i$ because $\xi_j\not\in C_i$. On the other hand, 
$(\Ical_{C_i})_{\xi_i}$ is the ideal of 
$\Ocal_{X,\xi_i}$ generated by $\pi$, which is a 
nonzero-divisor because $f$ is flat, 
whence $\Ical_{C_i}$ is also invertible 
at $\xi_i$.

Since $\Ical_{C_i}(d)$ is globally generated, and 
since $H^1(\IP^m_R,\Ical_{X|\IP^m_R}(d))=0$, for 
each $j=1,\dots,n+r$ there exists a degree-$d$ homogeneous polynomial 
$P_j\in R[t_0,\dots,t_m]$ 
generating $\Ical_{C_i}(d)$ at $\xi_j$. Since 
$k$ is infinite, a general linear combination 
$Q_i:=\sum_jc_jP_j$ with $c_j\in k$ generates 
$\Ical_{C_i}(d)$ at $\xi_j$ for every $j=1,\dots,n+r$. 
Let $G_i\subseteq X$ be the subscheme cut out by 
$Q_i=0$. Since $G_i$ does not vanish on $\xi_j$ 
for any $j=n+1,\dots,n+r$, the subscheme $G_i$ is a 
Cartier divisor. It is indeed the Cartier divisor 
required by the first statement.

As for the second statement, as $X_s$ is reduced, 
for each $j=1,\dots,n$ the point $\xi_j$ lies on the nonsingular locus of 
$X_s$, whence on the nonsingular locus of $X$. So the local ring 
$\Ocal_{X,\xi_j}$ is a discrete valuation ring. 

For each $j=1,\dots,n$, consider the ideal of $D$ at 
$\xi_j$. If it were zero, then $D$ would contain any 
irreducible closed subscheme of $X$ containing $\xi_j$. 
However, among those there is at least one irreducible 
component of $X$, whose generic point lies over $\eta$ 
by flatness. Thus $D$ would contain an irreducible 
component of $X_\eta$, contradicting the hypothesis that 
$D\cap X_\eta$ is a Cartier divisor. So the ideal of 
$D$ at $\xi_j$ is nonzero. Since $\Ocal_{X,\xi_j}$ is a 
discrete valuation ring, this ideal is thus a power 
$p_j$ of the maximal ideal. Let $G_1,\dots,G_n$ be the 
Cartier divisors of $X$ claimed in the first statement. 
Set
       $$E_i:=\sum_{p_j<p_i}(p_i-p_j)G_j.$$
Then $E_i$ is an effective Cartier divisor 
of $X$ not containing $\xi_i$. Also, 
$D+E_i\supseteq p_iX_s$. Set $F_i:=D+E_i-p_iX_s$. Then 
$F_i$ is a subscheme of $X$ that does not contain 
$\xi_i$. 
\end{proof}

\section{Examples}

\begin{example}\setcounter{equation}{0} 
(See \cite{F}, Ex.~11.3.2, p.~203, and 
the references listed there.) 
Let $F$, $A_1$, $A_2$, $G_1$ and $G_2$ 
be forms defining hypersurfaces. Assume $FA_i$ and $G_j$ 
have the same degree, for $i=1,2$ and $j=1,2$. Assume as 
well that 
\begin{equation}\label{hyp1}
	\gcd(FA_2,A_1)=1\quad\text{and}\quad
        \gcd(A_1G_2-A_2G_1,F)=1.
      \end{equation}
Consider the pencils $FA_i+tG_i=0$ for $i=1,2$, and 
their intersection. The hypotheses \eqref{hyp1} imply 
that the intersection is proper for a general $t$. 
Indeed, if the intersection were not proper, then 
there would be a nonnegative integer $m$ and forms $L_0,\dots,L_m$ 
of the same positive degree such that 
the polynomial $L_0+TL_1+\cdots+T^mL_m$ 
divides $FA_1+TG_1$ and $FA_2+TG_2$. But then $L_0$ 
would divide $FA_1$, $FA_2$ and $A_1G_2-A_2G_1$.

Let $W$ denote the limit of the intersection of the 
pencils as $t$ goes to 0. The intersection of the hypersurfaces
$FA_1=0$ and $FA_2=0$ is not proper, and thus does not 
reflect $W$ well. However, $FA_2=0$ cuts a 
Cartier divisor on $A_1=0$. In addition, 
     $$A_1(FA_2+tG_2)=A_2(FA_1+tG_1)+t(A_1G_2-A_2G_1),$$
and $A_1G_2-A_2G_1=0$ cuts a Cartier divisor on $F=0$. 
Thus, by Theorem \ref{thm}, 
     \begin{align*}
       [W]&=[FA_2,A_1=0]+[A_1G_2-A_2G_1,F=0]-
       [A_1,F=0]\\
       &=[A_2,A_1=0]+[A_1G_2-A_2G_1,F=0].
     \end{align*}
\end{example}

\begin{example}\setcounter{equation}{0}  
(See \cite{F}, Ex.~11.3.3, p.~203.)
Consider the families of plane curves 
parameterized by $t$:
     $$x^2y-tz^3=0\quad\text{and}\quad
       (x-t^2y)(y^2-t^2x^2)=0.$$
It is easy to compute the intersection of the 
above curves for general $t$, as the second curve is 
a union of lines. Letting $\beta$ be a primitive 
cubic root of unity, we see 
that the intersection is reduced 
and consists of the nine points:
     $$(t^2:1:t\beta^j),\quad
       (1:t:\beta^j)\quad\text{and}\quad(1:-t:-\beta^j)
       \quad\text{for $j=0,1,2$.}$$
As $t$ goes to 0, the $(t^2:1:t\beta^j)$ 
approach $(0:1:0)$, while the remainder approach the 
six points $(1:0:\pm\beta^j)$.

To compute these limits using Theorem \ref{thm}, 
we first use Proposition~\ref{limsum} to reduce the 
problem to that of computing the limits of the Cartier 
divisors cut on $x^2y-tz^3=0$ by $x-t^2y=0$ and by 
the lines $y\pm tx$. Call $D$ the first limit and 
$D_\pm$ the last two limits.

We will actually use Theorem \ref{thm} to 
compute $2[D]$, and then use Proposition~\ref{limsum} 
to get $[D]$. First, $x^2=0$ cuts a Cartier divisor on 
$y=0$. Also, 
       $$y(x-t^2y)^2\equiv(x^2y-tz^3)+tz^3
         \quad\text{mod }t^2,$$
and $z^3=0$ cuts a Cartier divisor on $x^2=0$. Thus, 
by Theorem \ref{thm},
       $$2[D]=[x^2,y=0]+[z^3,x^2=0]-[y,x^2=0]=6[z,x=0].$$
So $[D]=3[(0:1:0)]$.

As for $D_\pm$, first $y=0$ cuts out a Cartier divisor 
on $x^2=0$. Also,
       $$x^2(y\pm tx)=(x^2y-tz^3)+t(z^3\pm x^3),$$
and $z^3\pm x^3$ cuts a Cartier divisor on $y=0$. 
Thus, by Theorem \ref{thm},
      \begin{align*}
	[D_\pm]=&[y,x^2=0]+[z^3\pm x^3,y=0]-[x^2,y=0]\\
	=&\textstyle\sum_{j=0}^2[(1:0:\mp\beta^j)].
	\end{align*}
\end{example}

\begin{example}\label{simple} 
If a plane curve is smooth, its 
flexes are cut out by the Hessian, 
the determinant of 
the symmetric matrix of second-order partial derivatives 
of the form defining the curve. 
However, if the curve has a linear or a multiple 
component, the Hessian vanishes completely on that 
component. 
What are the possible limits of flexes on the 
curve if the curve has such components?

The above question, considered in \cite{EHb}, p.~151, will also 
be considered in more detail 
in \cite{EM}. Here we will just consider the 
simple case where the curve is reduced with just two 
components, and just one of them is linear, and where 
the curve is deformed in first order 
along a general direction.

Let $k$ be an algebraically closed 
field of characteristic zero. For each $P\in k[[t]][x,y,z]$, 
define the derivation 
$D_P:=\d_y(P)\d_x-\d_x(P)\d_y$, where $\d_x$, $\d_y$ and $\d_z$ 
are the canonical partial $k[[t]]$-derivations of 
$k[[t]][x,y,z]$. Notice that $D_P(P)=0$. 
For short, let $P_x:=\d_x(P)$, $P_y:=\d_y(P)$ and $P_z:=\d_z(P)$. 
Define the Hessian determinant $H(P)$ and 
the Wronskian determinant $W(P)$:
       $$H(P):=\begin{vmatrix}
            P_{x,x} & P_{x,y} & P_{x,z}\\
	    P_{y,x} & P_{y,y} & P_{y,z}\\
	    P_{z,x} & P_{z,y} & P_{z,z}\\
	    \end{vmatrix}\quad\text{and}\quad
         W(P):=\begin{vmatrix}
              x    &   y    &   z   \\
	     D_P(x)  &  D_P(y)  &  D_P(z) \\
	    D_P^2(x) & D_P^2(y) & D_P^2(z)\\
	    \end{vmatrix}.$$
If $P$ is homogeneous of degree $p$, it 
follows from applying the Euler Formula twice that
       \begin{equation}\label{zH}
	 z^3H(P)\equiv (p-1)^2W(P)\quad\text{mod }P.
       \end{equation}

Let 
      $$F:=xG-F_1t\in k[[t]][x,y,z],$$ 
where $G$ and $F_1$ are nonzero forms of degrees 
$d-1$ and $d$, respectively, for an integer $d\geq 3$. 
Assume $G$ is irreducible and $\gcd(xz,G)=1$. 
Then $D_F\equiv xD_G$ mod $(G,t)$, and hence $D_F(G)\equiv 0$ mod $(G,t)$.
So, using the mutilinearity of the determinant, the 
product rule for derivations, and \eqref{zH} for 
$P:=G$, we get 
       \begin{equation}\label{zHP}
	 (d-1)^2W(F)\equiv (xz)^3H(G)\quad\text{mod }(G,t).
       \end{equation}
Since $G$ is irreducible and nonlinear, $H(G)$ 
cuts a divisor on the curve defined by $G$. Let 
$R_G$ denote the 0-cycle of $\mathbb P^2$ associated to this 
divisor. Since $\gcd(G,xz)=1$, it follows from 
\eqref{zHP} that also $W(F)_0$ cuts a divisor 
on the curve given by $G$, where $W(F)_0$ is the constant term 
of $W(F)$.

Since $D_F(F)=0$, using the 
multilinearity of the determinant and the product rule for
derivations, we get 
        \begin{equation}\label{GWt}
	  G^3W(F)\equiv tW'\quad\text{mod }F,
	\end{equation}
where $W'$ is the Wronskian determinant:
        $$W':=\begin{vmatrix}
          F_1 & Gy & Gz \\
	  D_F(F_1) & D_F(Gy) & D_F(Gz)\\
	  D^2_F(F_1) & D^2_F(Gy) & D^2_F(Gz)\\
	  \end{vmatrix}.$$
Now, $D_F\equiv -G\d_y$ mod $(x,t)$, and so $D_F(x)\equiv 0$ mod $(x,t)$. 
Thus, using the 
multilinearity of the determinant and the product rule 
for derivations, we get 
        \begin{equation}\label{WLg}
	  W'\equiv -g^3w\quad\text{mod }(x,t),
	\end{equation}
where $g:=G(0,y,z)$ and $w$ is the Wronksian determinant:
        $$w:=\begin{vmatrix}
              f'   &     g'   &     g''  \\
            f'_y   &   g'_y   &   g''_y  \\
	  f'_{y,y} & g'_{y,y} & g''_{y,y}\\
	  \end{vmatrix},$$
with $f':=F_1(0,y,z)$, $g':=gy$ and 
$g'':=gz$. If $F_1$ is general enough ---
more precisely, if 
$F_1\not\in (x,G)$ --- then $w\neq 0$, and hence also 
$W'\not\equiv 0$ mod $(x,t)$. 

Now, since $f'$, $g'$ and $g''$ have degree $d$, applying 
Euler formula three times, we get
        \begin{equation}\label{dwzh}
	  (d-1)^2dw=z^3h,
	\end{equation}
where $h$ is the ``Hessian'' determinant:
       $$h:=\begin{vmatrix}
         f'_{z,z} & g'_{z,z} & g''_{z,z}\\
         f'_{y,z} & g'_{y,z} & g''_{y,z}\\
         f'_{y,y} & g'_{y,y} & g''_{y,y}\\
         \end{vmatrix}.$$
Let $R_x$ denote 
the 0-cycle of $\mathbb P^2$ of the scheme given by 
$h=x=0$.

For any two coprime forms $P_1$ and $P_2$ of 
$k[x,y,z]$, denote by $[P_1\cdot P_2]$ the 0-cycle 
of $\mathbb P^2$ of the scheme given by $P_1=P_2=0$.

Assume $F_1$ is general. Let $S:=\text{Spec}(k[[t]])$ and 
$X\subseteq\text{\bf P}^2_S$ be the subscheme cut out by 
$F$. Let $f\: X\to S$ denote the structure map. 
Let $E\subseteq X$ be the subscheme cut out by $W(F)$. 
Since $F_1$ is general, the general fiber $X_\eta$ of $f$ is 
smooth, and thus, by \eqref{zH} applied to $P:=F$, the subscheme 
$E$ cuts $X_\eta$ in its divisor of flexes plus 3 times the 
hyperplane section given by $z=0$. 
So, $E$ is a Cartier divisor of $X$. 
And, by Proposition~\ref{limsum}, 
$[\lim E]=3[z\cdot(xG)]+R$, where $R$ is the 0-cycle 
of the schematic boundary of the divisor of flexes 
of $X_\eta$.

Using \eqref{GWt} and Theorem \ref{thm}, 
       $$[\lim E]=[W(F)_0\cdot G]+[W'_0\cdot x]-3[G\cdot x],$$
where $W'_0$ is the constant term of $W'$.
In addition, 
by \eqref{zHP}, \eqref{WLg} and \eqref{dwzh},
       \begin{align*}
	 [W(F)_0\cdot G]=&3[x\cdot G]+3[z\cdot G]+R_G,\\
	 [W'_0\cdot x]=&3[G\cdot x]+3[z\cdot x]+R_x.
       \end{align*}
Thus, 
       $$R=R_G+R_x+3(G\cdot x).$$
\end{example}

\end{document}